\documentclass[12pt]{article}
\usepackage{amssymb,amsfonts,amsmath,amsthm}
\usepackage[normalem]{ulem}
\bibliographystyle{amsplain}
\usepackage{pdfsync}
\usepackage{url}
\usepackage{color}
\binoppenalty=1000
\relpenalty=10000
%%%%%%%%%%%%%%%%%%%%%%%%%%%%%%%%%%%%%%%%%%%%%%%%
\oddsidemargin -5pt
\evensidemargin -5pt
\topmargin -25pt
\headheight 12pt
\headsep 25pt
\footskip 30pt
\textheight 9in
\textwidth 6.5in
\columnsep .375in
\columnseprule 0pt
%%%%%%%%%%%%%%%%%%%%%%%%%%%%%%%%%%%%%%%%%%%%%%%%
\newcommand{\ol}{\overline}
\newcommand{\witi}{\widetilde}
\newcommand{\fracd}[2]{\frac {\displaystyle #1}{\displaystyle #2 }}
\newcommand{\ee}{{\mathbb E}}
\newcommand{\hh}{{\mathbb H}}
\newcommand{\nn}{{\mathbb N}}
\newcommand{\pp}{{\mathbb P}}
\newcommand{\rr}{{\mathbb R}}
\newcommand{\xx}{{\mathbb X}}
\newcommand{\zz}{{\mathbb Z}}
\newcommand{\cala}{{\mathcal A}}
\newcommand{\calb}{{\mathcal B}}

\newcommand{\cale}{{\mathcal E}}
\newcommand{\calf}{{\mathcal F}}
\newcommand{\calg}{{\mathcal G}}
\newcommand{\call}{{\mathcal L}}
\newcommand{\calp}{{\mathcal P}}
\newcommand{\calr}{{\mathcal R}}
\newcommand{\cals}{{\mathcal S}}

\newcommand{\veps}{\varepsilon}

\newcommand{\beq}{\begin{eqnarray*}}
\newcommand{\feq}{\end{eqnarray*}}
\newcommand{\beqn}{\begin{eqnarray}}
\newcommand{\feqn}{\end{eqnarray}}
\newcommand{\as}{\mbox{\rm a.\,s.}}
\newcommand{\io}{\mbox{\rm i.\,o.}}
\newcommand{\sign}{\mbox{\rm sign}}

\newcommand{\ind}[1]{\mbox{\large \bf 1}_{\{#1\}}}
\newtheorem{theorem}{Theorem}
\makeatletter \@addtoreset{theorem}{section}\makeatother

%%%%%%%%%%%%%%%%%%%%%%%%%%%%%%%%%%%%%%%%%%%%%%%%%%%
\newtheorem{lemma}[theorem]{Lemma}

\newtheorem{assume}[theorem]{Assumption}
\newtheorem*{theorem*}{Theorem}
\newtheorem*{corollary*}{Corollary}
\newtheorem{proposition}[theorem]{Proposition}
\newtheorem{corollary}[theorem]{Corollary}
\newtheorem{remark}[theorem]{Remark}

\newtheorem*{cond-b}{Condition~$\mathbf{B}$}
\newtheorem*{cond-c}{Condition~$\mathbf{C}_\kappa$}
\newtheorem*{cond-cl}{Condition~$\mathbf{C(\kappa,L)}$}

\begin{document}
\title{Random walks in a sparse random environment}
\author{Anastasios Matzavinos\thanks{Division of Applied Mathematics, Brown University, Providence, RI 02912, USA; \qquad $\mbox{}$ \qquad $\mbox{}$ \qquad $\mbox{}$\qquad $\mbox{}$ \qquad e-mail: matzavinos@brown.edu}
\and
Alexander~Roitershtein\thanks{Dept. of Mathematics, Iowa State University, Ames, IA 50011, USA; e-mail: roiterst@iastate.edu}
\and
Youngsoo Seol\thanks{Dept. of Mathematics, University of South Florida, Tampa, FL 33620, USA; e-mail: yseol@usf.edu}
}
\date{September 5, 2016; Revised: November 17, 2016}
\maketitle
\begin{abstract}
We introduce random walks in a sparse random environment on $\zz$ and investigate basic asymptotic properties of this model, such as
recurrence-transience, asymptotic speed, and limit theorems in both the transient and recurrent regimes. The new model combines features of
several existing models of random motion in random media and admits a transparent physical interpretation. More specifically, a random walk
in a sparse random environment can be characterized as a ``locally strong" perturbation of a simple random walk by a random potential induced by ``rare impurities,"
which are randomly distributed over the integer lattice. Interestingly, in the critical (recurrent) regime, our model generalizes Sinai's scaling of $(\log n)^2$ for the location of the random walk after $n$ steps to $(\log n)^\alpha,$ where $\alpha>0$ is a parameter determined by the distribution of the distance between two successive impurities. Similar scaling factors have appeared in the literature in different contexts and have been  discussed in \cite{alpha} and \cite{preds}.
\end{abstract}
{\em MSC2010: } primary 60K37; secondary 60F05.\\
\noindent{\em Keywords}: RWRE, sparse environment, limit theorems, Sinai's walk.

\section{Introduction: The model}\label{intro}

We start with a general description of one-dimensional random walks in a random environment.
Let $\Omega=(0,1)^\zz$ and let $\calf$ be the Borel $\sigma-$algebra of subsets of the product space $\Omega.$
A \emph{random environment} is a random element $\omega=(\omega_n)_{n\in\zz}$ of the measurable space $(\Omega,\calf).$
The environment determines the transition kernel of the underlying random walk. Namely,
a \emph{random walk in a random environment} $\omega=(\omega_n)_{n\in \zz} \in \Omega$ is a Markov chain $(X_n)_{n\geq 0}$ on $\zz$, the  transition kernel of which is given by
\beq
P_\omega(X_{n+1}=j|X_n=i)=
\left\{
\begin{array}{ll}
\omega_i&\mbox{if}~j=i+1\\
1-\omega_i&\mbox{if}~j=i-1\\
0&\mbox{otherwise}.
\end{array}
\right.
\feq
The probability distribution of this Markov chain, starting at the initial state $X_0=x,$ $x\in\zz,$ is denoted by $P_{x,\omega}$
and is referred to as the \emph{quenched law} of the random walk.
\par
We denote the probability distribution of $\omega$ in $(\Omega,\calf)$ by $P,$ and we let $E_P$ denote the corresponding expectation operator. That is,
for a measurable function $f(\omega)$ of the environment $\omega$, we have $E_P(f)=\int_\Omega f(\omega)dP(\omega).$
Let $\calg$ be the cylinder $\sigma$-algebra on $\zz^n.$ A \emph{random walk in a random environment} (RWRE) associated with
$P$ is a process $(X,\omega)$ on the measurable space $(\Omega \times \zz^\nn, \calf \otimes \calg)$
equipped with the {\em annealed} probability law $\pp=P \otimes P_\omega$, which is defined by
\beq
\pp_x(F \times G)=\int_F P_{x,\omega}(G) P(d\omega),~~~ F \in \calf,~ G \in \calg.
\feq
The expectations under the laws $P_{x,\omega}$ and $\pp_x$ are denoted by $E_{x,\omega}$ and $\ee_x,$ respectively. We will usually omit
the index $0$ when $x=0,$ which is to say that we will write $P_\omega,$ $E_\omega,$ $\pp,$ and $\ee$ for $P_{0,\omega},$ $E_{0,\omega},$ $\pp_0,$ and $\ee_0,$ respectively.
Notice that, since the process ``learns" the environment according to the Bayes rule as time progresses, $X=(X_n)_{n\geq 0}$
is not, in general, a Markov chain under the annealed measure $\pp.$
\par
We now describe in detail the specific model of random environment that we consider in this paper. Let $(\lambda_k,d_k)_{k\in\zz}$ be a stationary and ergodic sequence of pairs,
such that $\lambda_k\in (0,1)$ and $d_k\in\nn.$ Throughout the paper we denote by $P$ the joint law of the sequence of pairs $(\lambda_k,d_k)_{k\in\zz}.$ For $n\in\zz$ let
\beq
a_n=\left\{
\begin{array}{ll}
\sum_{k=1}^n d_k&\mbox{if}~n>0\\
0&\mbox{if}~n=0\\
-\sum_{k=-n}^{0} d_k&\mbox{if}~n<0.
\end{array}
\right.
\feq
The random variables $a_n$ serve as locations of random impurities in the, otherwise homogeneous, random medium.
More precisely, the {\em sparse random environment} $\omega$ associated with $P$ is defined by
\beqn
\label{senv}
\omega_n=
\left\{
\begin{array}{ll}
\lambda_k&\mbox{if}~n=a_k~\mbox{for some}~k\in\zz,\\
1/2&\mbox{otherwise}.
\end{array}
\right.
\feqn
For future reference, we also define
\beqn
\label{rhoxi}
\rho_n=\frac{1-\omega_n}{\omega_n} \qquad \mbox{and}\qquad \xi_n=\frac{1-\lambda_n}{\lambda_n},\qquad n\in\zz.
\feqn
We refer to the random walk in the environment defined by \eqref{senv} as a {\em random walk in a sparse random environment} (RWSRE).
The primary focus of this paper is to illuminate the dependence of the basic properties of RWSREs  on the distribution of the
sequence $(d_n)_{n\in\zz},$ and compare the dynamics of RWSREs to the dynamics of the classical RWRE, which corresponds to the special case $d_0=1$ $\as$
\par
In the classical RWRE model, $\omega$ is a stationary and ergodic sequence under $P$ \cite{notes}.
It is known (see \cite{kks,sinair,zsurvey,notes} and, for instance, \cite{sinai-a, spect-sinai, chelo} and \cite{sponsors, spotential,jon} for some
recent advances in the recurrent and transient cases, respectively) that asymptotic results for one-dimensional RWREs can usually be stated in terms of certain averages of
functions of $\rho_0$ and explained by means of typical ``landscape features" (such as \emph{traps} and \emph{valleys}) of
the {\em random potential} $(R_n)_{n\in\zz},$ which is associated with the random environment as follows:
\beqn
\label{potential}
R_n =
\begin{cases}
\phantom{-}\sum\limits_{k=1}^n \log\rho_k     & \text{if } n>0,\smallskip\\
-\sum\limits_{k=0}^{|n-1|} \log\rho_{-k} & \text{if } n<0.
\end{cases}
\feqn
We remark that interpreting a RWRE as a random walk in the random potential \eqref{potential} serves to provide heuristic explanations to most results about RWREs, including those discussed in this paper.
\par
In our model, the sparse environment $\omega_n$ is defined as a simple functional of the \emph{marked point process} $(a_n,\lambda_n)_{n\in\zz},$
and it is in general non-stationary. However, it is well known that if $E_p(d_0)<\infty,$ the underlying probability space can be enlarged to include a random variable $M$, such that the random shift $(a_{n+M},\lambda_{n+M})_{n\in\zz}$ of the sequence $(a_n,\lambda_n)_{n\in\zz}$ is stationary and ergodic. Therefore, one should expect that if $E_P(d_0)<\infty$, basic zero-one laws, such as recurrence-transience, existence of asymptotic speed, and ballisticity, are similar to the corresponding features of the RWRE associated with the random environment $\omega=(\lambda_n)_{n\in\zz}.$
\par
However, an analogous claim about the similarity of limit theorems for random walks in the environments $\lambda$ and $\omega$ is less
obvious. Indeed, the dependence structure associated with the environment plays a crucial role in such theorems, and it is clearly not preserved under the transformation $\lambda\mapsto \omega.$ We remark, for instance, that the transformation of an i.i.d. environment yields a stationary and ergodic transformation of a Markov chain. See Sections~\ref{dual} and~\ref{stable1} for a more detailed discussion, and Section 3.5 for the case $E(d_0)=\infty$.
\par
In the continuous setting, a model which is closely related to the RWSREs discussed in this paper is  the multi-skewed Brownian motion introduced in \cite{multiskew}.
A direct discrete-time analogue of the multi-skewed Brownian motion is a \emph{multi-skewed random walk}, which can be introduced
as a quenched variant of our model when $(\lambda_n)_{n\in\mathbb{Z}}$ is a certain deterministic sequence of constants. In accordance to the physical motivation of the model in \cite{multiskew}, the author refers to the marked sites (i.e., the elements of $\cala$ in the author's notation) as interfaces, while the long stretches of ``regular" sites between interfaces are referred to by the author as layers.
\par
We remark that certain random environments that consist of alternating stretches of sites of two different types, and induce sub-linear growth rates
on the underlying random processes, have been considered in \cite{fixede, preds} and, in a slightly different context, in \cite{contact}.
The overlap between results and proof methods in this work and in \cite{fixede,preds} is minimal, and it is discussed in more detail in Section~\ref{main}.
\par
Somewhat related to our work is the study of \cite{perturbation, perturbation1}, where it is shown that an $X_n\sim (\log n)^\alpha$
asymptotic behavior of the random walk can occur under a perturbation of an i.i.d. recurrent environment $(\omega_n)_{n\in\zz}$ of the form $\omega^{\mbox{\rm \tiny new}}_n=\omega_n+f_n,$ where $E_P\bigl(\log \frac{1-\omega_0}{\omega_0}\bigr)=0$ and $f_n$ converges to zero in probability as $|n|\to \infty.$
A heuristic explanation of this phenomenon can be provided by considering that both recurrent and transient random walks ``spread out," and  hence, for a large class of perturbations $f_n,$ a typical landscape of the environment as viewed from the particle at time $n$ can be identified. Moreover, it can be shown that this typical landscape is a dominant factor in determining the asymptotic behavior of the random walk as $n$ goes to infinity.
\par
The rest of the paper is organized as follows. In Section \ref{dual}, we introduce a notion of a dual stationary environment along with the Palm dualities that are used in the proof of our results. In Section \ref{main} we state and discuss our main results for the asymptotic behavior of RWSREs. These include recurrence and transience criteria, asymptotic speed, stable laws, and a Sinai-type result for RWSREs. Finally, in Section \ref{proofs} we present the proofs of our results.

\section{Dual stationary environment}
\label{dual}
The sparse environment introduced in Section~\ref{intro} is in general a non-stationary sequence. The aim of this section is to introduce a \emph{dual} stationary environment
and relate the properties of the RWSRE to the corresponding properties of the RWRE in the dual environment.
If $E_p(d_0)<\infty,$ then the underlying probability space can be enlarged to include a non-negative random variable $M,$ such that the random shift $(\omega_{n-M})_{n\in\zz}$ of the environment $(\omega_n)_{n\in\zz}$ is stationary and ergodic. Furthermore, the distribution of the sparse environment turns out to be the distribution of its stationary version conditioned on the event $0\in\cala.$
\par
In contrast to the usual RWRE, $\omega=(\omega_n)_{n\in\zz}$ is in general a non-stationary sequence in the RWSRE model.
In fact, $\omega$ is \emph{cycle-stationary} under $\pp,$ namely
\beq
\theta^{d_n}\omega=_D \omega \qquad \mbox{under}~\pp\quad \mbox{for all}~n\in\zz,
\feq
where $X=_D Y $ means that the distributions of the random variables $X$ and $Y$ coincide, and the \emph{shift}
$\theta^k$ is a measurable mapping of $(\Omega,\calf)$ into itself which is defined for any (possibly random) $k\in\zz$ by
\beq
(\theta^k \omega)_n=\omega_{n+k},\qquad n\in\zz.
\feq
If $E_P(d_1)<\infty$ one can define a ``stationary dual" $\witi \omega$ of the environment $\omega$ as follows \cite{palmain,pdualities}. Without loss of generality, we assume that the underlying probability space supports a random variable $U$, which is independent of $\omega$ and is distributed uniformly on
the interval $[0,1]\subset\mathbb{R}$. For $x\in\rr,$ let $\lfloor x \rfloor$ denote the integer part of $x,$ that is $\lfloor x \rfloor=\sup\{n\in\zz:n\leq x\}.$
We now define $\bigl(\witi a_n, \witi \omega_n)_{n\in\zz}$ by setting
\beq
\witi a_n=a_n+\lfloor U d_0 \rfloor\qquad \mbox{and}\qquad
\witi
\omega_n=
\left\{
\begin{array}{ll}
\lambda_k&\mbox{if}~n=\witi a_k~\mbox{for some}~k\in\zz,\\
1/2&\mbox{otherwise}.
\end{array}
\right.
\feq
Let $\cala=(a_n)_{n\in\zz}$ be the set of marked sites of the integer lattice and
let $\witi \cala=(\witi a_n)_{n\in\zz}$ denote its randomly shifted version introduced above. Furthermore, let
\beq
e_n=\ind{n\in\cala}\qquad\mbox{and}\qquad \witi e_n=\ind{n\in\witi \cala},\quad n\in\zz,
\feq
and let $\Upsilon:=(e_n,\omega_n)_{n\in\zz}$ and $\witi \Upsilon:=\bigl(\witi e_n,\witi \omega_n\bigr)_{n\in\zz}.$
Notice that this construction implies the identity $\bigl(\witi e_n,\witi \omega_n\bigr)_{n\in\zz}=\bigl(\theta^{-M} e_n,\theta^{-M} \omega_n\bigr)_{n\in\zz},$ where $M:=\lfloor U d_0 \rfloor.$
\par
For sparse environments induced by a renewal sequence $a_n,$ the dual environment can be defined equivalently in a rather explicit manner as
a functional of an auxiliary Markov chain. We will exploit this alternative construction in Section~\ref{stable-proofs}.
The uniqueness of the dual environment (which implies, in particular, that the alternative construction yields the same dual) follows
from the reverse ``stationary to cycle-stationary" Palm duality described, for instance, in \cite[Theorem~1]{pdualities}.
\par
The following theorem is an adaptation to our setting of the classical Palm dualities \cite[Chapter~8]{palmain}
between the distribution of $\Upsilon$ under $P$ and the distribution of $\witi \Upsilon$ under a measure $Q$ equivalent to $P$.
\begin{theorem}[see Theorem~2 in \cite{pdualities}]
\label{theorem-a}
Assume that $(\lambda_n,d_n)_{n\in\nn}$ is a stationary ergodic sequence under $P$ and $E_P(d_0)<\infty.$ Define a new probability measure $Q$
on the Borel subsets of the product set $\bigl(\{0,1\}\times (0,1)\bigr)^\zz$ by setting
\beq
\frac{dQ}{dP(\upsilon)}=\frac{d_0(\upsilon)}{E_P(d_0)}, \qquad \upsilon \in \bigl(\{0,1\}\times (0,1)\bigr)^\zz.
\feq
Then:
\item [(a)] $(\witi e_n,\witi \omega_n)_{n\in\zz}$ is  a stationary and ergodic sequence under $Q.$
\item [(b)] $P(\cala\in \cdot\,)=Q\bigl(\witi \cala\in \cdot\,|0 \in\witi\cala\bigr).$
\end{theorem}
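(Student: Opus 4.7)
Setting $M:=\lfloor Ud_0\rfloor$ so that $\witi\Upsilon=\theta^{-M}\Upsilon$ by construction, the cornerstone is the inversion identity
\beq
E_Q\bigl[h(\witi\Upsilon)\bigr]=\frac{1}{E_P(d_0)}\,E_P\!\left[\,\sum_{j=0}^{d_0-1}h(\theta^{-j}\Upsilon)\right],\qquad h~\text{bounded measurable},
\feq
which follows at once from $dQ/dP=d_0/E_P(d_0)$ together with the fact that, conditionally on $\Upsilon$, the variable $M$ is uniform on $\{0,1,\ldots,d_0-1\}$ (since $U$ is independent of $\Upsilon$ and uniform on $[0,1]$).

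For stationarity in part~(a), by iteration it suffices to show $E_Q[f(\theta\witi\Upsilon)]=E_Q[f(\witi\Upsilon)]$ for every bounded measurable $f$. Applying the inversion identity once with $h(\upsilon)=f(\theta\upsilon)$ and once with $h=f$ and subtracting, the two sums telescope to leave
\beq
E_Q\bigl[f(\theta\witi\Upsilon)\bigr]-E_Q\bigl[f(\witi\Upsilon)\bigr]=\frac{1}{E_P(d_0)}\,E_P\bigl[f(\theta\Upsilon)-f(\theta^{1-d_0}\Upsilon)\bigr].
\feq
The crucial observation is that the index shift $k\mapsto k-1$ on $(\lambda_k,d_k)$ corresponds, when read off in the variable $\Upsilon$, to the deterministic functional shift $\theta^{a_{-1}}=\theta^{-d_0}$; hence $P$-stationarity of $(\lambda_k,d_k)_{k\in\zz}$ yields $\theta^{-d_0}\Upsilon=_D\Upsilon$ under $P$. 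Writing $\theta^{1-d_0}\Upsilon=\theta(\theta^{-d_0}\Upsilon)$ and applying this fact to the bounded measurable functional $g(\upsilon):=f(\theta\upsilon)$ gives $E_P[f(\theta^{1-d_0}\Upsilon)]=E_P[f(\theta\Upsilon)]$, so the telescoped difference vanishes.

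Ergodicity of $\witi\Upsilon$ under $Q$ will be deduced from ergodicity of $(\lambda_k,d_k)_{k\in\zz}$ under $P$ through the same identification: a $\theta$-invariant event for $\witi\Upsilon$ pulls back, via the construction of $Q$, to a set that is invariant under every index shift $k\mapsto k+n$, hence is $P$-trivial. This is the only point that requires some care; stationarity is a bare manipulation and part~(b) below is essentially a one-line calculation.

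For part~(b) observe that $0\in\witi\cala$ iff $-M\in\cala$; since $M\in\{0,\ldots,d_0-1\}$ and $a_{-1}=-d_0$, this occurs precisely when $M=0$. Conditionally on $\Upsilon$, $Q(M=0\mid\Upsilon)=1/d_0$, so $Q(0\in\witi\cala)=E_Q[1/d_0]=E_P[d_0/d_0]/E_P(d_0)=1/E_P(d_0)$. Since $\witi\cala=\cala$ on the event $\{M=0\}$, for every measurable set $A$ one has
\beq
Q(\witi\cala\in A,\,0\in\witi\cala)=E_Q\bigl[\ind{\cala\in A}/d_0\bigr]=\frac{P(\cala\in A)}{E_P(d_0)},
\feq
and dividing by $Q(0\in\witi\cala)=1/E_P(d_0)$ yields the claimed identity $Q(\witi\cala\in\cdot\,|\,0\in\witi\cala)=P(\cala\in\cdot\,)$.
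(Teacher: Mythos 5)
Your proof is correct and, notably, self-contained — which is more than the paper offers: the paper does not prove Theorem~\ref{theorem-a} at all, but cites it as Theorem~2 of Thorisson's work on cycle-stationary sequences and defers the ergodicity claim to an exercise in Petersen's book. Your route is the standard Palm inversion argument, and every step checks out: the inversion identity is the correct disintegration over $M=\lfloor Ud_0\rfloor$ (uniform on $\{0,\dots,d_0-1\}$ given the sequence, and still so under $Q$ since $dQ/dP$ does not involve $U$); the telescoping leaves exactly the boundary terms $f(\theta\Upsilon)-f(\theta^{1-d_0}\Upsilon)$; and the identification of the index shift $k\mapsto k-1$ on $(\lambda_k,d_k)$ with the functional shift $\theta^{-d_0}$ on $\Upsilon$ — the paper's ``cycle-stationarity'' — is precisely what kills them. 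The one place you only sketch, ergodicity, closes in one line with the same identity: if $B$ is $\theta$-invariant then $\ind{\theta^{-j}\Upsilon\in B}=\ind{\Upsilon\in B}$ for every $j$, so $Q(\witi\Upsilon\in B)=E_P\bigl[d_0\ind{\Upsilon\in B}\bigr]/E_P(d_0)$, while $\{\Upsilon\in B\}=\{\theta^{-d_0}\Upsilon\in B\}$ is invariant under the index shift and hence $P$-trivial by ergodicity of $(\lambda_k,d_k)$, forcing $Q(\witi\Upsilon\in B)\in\{0,1\}$. Part~(b) is also right; note only that your reading $a_{-1}=-d_0$ (so that $0\in\witi\cala$ iff $M=0$) is the intended one, the paper's displayed formula for $a_n$, $n<0$, containing an index slip. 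What your approach buys is a first-principles proof that stays entirely within the paper's notation; what the citation buys the authors is generality, since Thorisson's theorem covers the full stationary/cycle-stationary duality, including the converse direction the paper later invokes for uniqueness of the dual environment.
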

We remark that although the claim that the sequence $\witi \Upsilon_n=(\witi e_n,\witi \omega_n)$ is ergodic is not explicitly made in \cite{pdualities}, it can be deduced, for instance, from the result of Exercise~1 in \cite[p.~56]{ergodic-book}. The following is a straightforward corollary to Theorem~\ref{theorem-a}. For the sake of completeness,
the proof is given in the Appendix.
\begin{corollary}
\label{cor-1}
Under the conditions of Theorem \ref{theorem-a}, we have:
\item [(a)] $E_P(d_0^2)=E_P(d_0)\cdot E_Q(d_0).$
\item [(b)] $E_Q(d_0)=2 E_Q(\witi a_0)+1.$
\end{corollary}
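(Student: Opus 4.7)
The plan is to derive both identities directly from the definition of the change-of-measure density and the explicit description $\witi a_0 = \lfloor U d_0 \rfloor$ of the shifted origin. Notably, neither part appears to require anything from Theorem~\ref{theorem-a} beyond the defining formula for $dQ/dP$; the stationarity of $\witi \Upsilon$ under $Q$ plays no role in this particular corollary.

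For part (a), I would substitute the functional $d_0$ into the definition of $E_Q$:
\beq
E_Q(d_0)=E_P\Bigl(d_0\cdot\frac{d_0}{E_P(d_0)}\Bigr)=\frac{E_P(d_0^2)}{E_P(d_0)},
\feq
and clear denominators to obtain $E_P(d_0^2)=E_P(d_0)\cdot E_Q(d_0)$. Since $d_0\geq 1$, both sides lie in $[E_P(d_0),\infty]$, and the identity remains valid in the extended sense when $E_P(d_0^2)=\infty$.

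For part (b), first observe that $a_0=0$ by the very definition of $a_n$, and hence $\witi a_0=\lfloor U d_0\rfloor$. Because the Radon--Nikodym derivative $dQ/dP$ depends on $\omega$ only through $d_0$ and is constant in $U$, while under the enlarged $P$ the variable $U$ is uniform on $[0,1]$ and independent of $\omega$, it follows that $U$ remains uniform on $[0,1]$ and independent of $d_0$ under $Q$ as well. Conditioning on the integer value of $d_0$ then yields
\beq
E_Q\bigl(\lfloor U d_0\rfloor\,|\,d_0\bigr)=\frac{1}{d_0}\sum_{i=0}^{d_0-1} i=\frac{d_0-1}{2}.
\feq
Taking $E_Q$-expectations gives $E_Q(\witi a_0)=(E_Q(d_0)-1)/2$, which rearranges to $E_Q(d_0)=2E_Q(\witi a_0)+1$.

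The only point that requires a moment of care is the preservation of the distribution of $U$ after the change of measure; this is where the explicit dependence of $dQ/dP$ on $d_0$ alone, rather than on any joint feature of $(U,\omega)$, plays a role, and the verification is essentially a one-line factoring argument. Otherwise the derivation is bookkeeping, and I do not anticipate any genuine obstacle.
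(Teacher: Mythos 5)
Your proposal is correct and follows essentially the same route as the paper's own appendix proof: direct substitution of $d_0$ into the density $dQ/dP$ for part (a), and conditioning on $d_0$ with $\witi a_0=\lfloor U d_0\rfloor$ for part (b). The one place you go slightly beyond the paper is in explicitly verifying that $U$ remains uniform and independent of $d_0$ under $Q$ (the paper simply asserts this), which is a harmless and indeed welcome elaboration.
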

We remark that the identity $E_Q(d_0)=2 E_Q(a_0)+1$
can be thought of as a variation of the ``waiting time paradox" of the classical renewal theory \cite{durrett}.

\section{Asymptotic behavior of RWSRE}
\label{main}
In this section, we state the basic limit theorems that describe the asymptotic behavior of the random walk $X_n,$
while the proofs are provided in Section~\ref{proofs}. We first state recurrence and transience criteria for RWSRE.
\subsection{Recurrence and transience criteria}
\label{recs}
Let $\sigma_0=0$ and
\beq
\sigma_n=\inf\{k\in\nn:k>\sigma_{n-1}~\mbox{and}~X_k\in \cala\}.
\feq
Thus $(\sigma_n)_{n\in\mathbb{N}}$ consists of the times of successive visits of $X_n$ to the random point set $\cala.$ Define a nearest-neighbor random walk $(\xx_n)_{n\geq 0}$ on $\zz$ by setting
\beqn
\label{arw}
\xx_n=k\quad\mbox{if and only if}\quad X_{\sigma_n}=a_k.
\feqn
Taking into account the solution of the gambler's ruin problem for the  simple symmetric random walk, we note that $\xx_n$ is a RWRE with quenched transition probabilities given by
\beqn
\label{amat}
P_\omega(\xx_{n+1}=j|\xx_n=i)=
\left\{
\begin{array}{ll}
\xi_i\cdot\fracd{1}{d_i}&\mbox{if}~j=i+1\\
(1-\xi_i)\cdot\fracd{1}{d_{i-1}}&\mbox{if}~j=i-1\\
\xi_i\cdot \fracd{d_i-1}{d_i}+(1-\xi_i)\cdot \fracd{d_{i-1}-1}{d_{i-1}}&\mbox{if}~j=i\\
0&\mbox{otherwise}.
\end{array}
\right.
\feqn
Moreover, $\limsup_{n\to\infty}X_n=\limsup_{n\to\infty} \xx_n$ and $\liminf_{n\to\infty} X_n=\liminf_{n\to\infty} \xx_n,$ $\pp-\as$
Thus, under very mild conditions, recurrence and transience criteria for the RWSRE $X_n$ can be derived directly from the corresponding criteria for the RWRE $\xx_n$ (see, for instance,
\cite[Theorem~2.1.2]{notes} for the latter). More precisely, we have:
\begin{theorem}
\label{rec-thm}
Suppose that the following three conditions are satisfied:
\begin{enumerate}
\item The sequence of pairs $(d_n,\lambda_n)_{n\in\zz}$ is stationary and ergodic
\item $E_P(\log \xi_0)$ exists (possibly infinite)
\item  $E_P\bigl(\log d_0)<\infty.$
\end{enumerate}
Then:
\begin{itemize}
\item [(a)] $E_P(\log\xi_0)<0$ implies $\lim_{n\to\infty} X_n=+\infty,$ $\pp-\as$
\item [(b)] $E_P(\log\xi_0)>0,$ implies $\lim_{n\to\infty} X_n=-\infty,$ $\pp-\as$
\item [(c)] $E_P(\log\xi_0)=0$ implies $\liminf_{n\to\infty} X_n=-\infty$ and $\limsup_{n\to\infty} X_n=+\infty,$ $\pp-\as$
\end{itemize}
\end{theorem}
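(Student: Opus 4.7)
The plan is to derive all three parts of the theorem from the corresponding recurrence/transience dichotomy for the induced chain $\xx_n$. By the identities $\limsup_n X_n=\limsup_n \xx_n$ and $\liminf_n X_n=\liminf_n \xx_n$ recorded in the excerpt, the asymptotic fates of $X_n$ and $\xx_n$ coincide almost surely, so it suffices to prove (a)--(c) with $X_n$ replaced by $\xx_n$.

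Although $\xx_n$ holds at each site with positive probability and is therefore not literally a nearest-neighbour walk, its right- and left-jump probabilities in \eqref{amat} are strictly positive almost surely. Hence the embedded jump chain $\xx^*_n$, obtained by deleting consecutive repetitions, is well defined and visits the same integers, in the same order, as $\xx_n$. The problem thereby reduces to the recurrence/transience of the standard nearest-neighbour RWRE $\xx^*_n$ on $\zz$. Reading off its quenched left-to-right jump-probability ratio at site $i$, call it $\rho^*_i$, from \eqref{amat} yields an identity of the form
\[
\log\rho^*_i\;=\;\log\xi_i\;+\;\bigl(\log d_{i+1}-\log d_i\bigr),
\]
so that $(\log\rho^*_i)_{i\in\zz}$ is a stationary ergodic functional of $(\lambda_n,d_n)_{n\in\zz}$ whose partial sums decompose as a stationary ergodic sum plus a telescoping coboundary in $\log d$.

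The conclusion then follows from the classical stationary-ergodic form of Solomon's recurrence/transience criterion \cite[Theorem~2.1.2]{notes}, which identifies the asymptotic regime of $\xx^*_n$ with the sign of $\lim_{n\to\infty}\frac{1}{n}\sum_{k=1}^n \log\rho^*_k$. By Birkhoff's ergodic theorem and the telescoping identity, this limit equals
\[
E_P(\log\xi_0)\;+\;\lim_{n\to\infty}\frac{\log d_{n+1}-\log d_1}{n}\;=\;E_P(\log\xi_0),
\]
where the vanishing of the second term is an elementary Borel--Cantelli consequence of condition~(3): for every $\varepsilon>0$, stationarity yields $\sum_n P(\log d_n>\varepsilon n)=\sum_n P(\log d_0>\varepsilon n)\leq \varepsilon^{-1}E_P(\log d_0)<\infty$, so $\limsup_n n^{-1}\log d_n\leq\varepsilon$ almost surely, and letting $\varepsilon\downarrow 0$ gives $n^{-1}\log d_n\to 0$ a.s. The only genuine subtlety is the appeal to the stationary-ergodic Solomon criterion in the borderline case $E_P(\log\xi_0)=\pm\infty$, where Birkhoff's theorem must be invoked in its extended form for possibly non-integrable stationary ergodic sequences; everything else is routine reduction and bookkeeping.
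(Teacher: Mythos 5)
Your proposal is correct and follows essentially the same route as the paper, which likewise reduces Theorem~\ref{rec-thm} to the induced walk $\xx_n$ of \eqref{arw}--\eqref{amat} and then invokes the stationary--ergodic recurrence/transience criterion of \cite[Theorem~2.1.2]{notes}; your added details (passing to the jump chain to remove the holding probability, the telescoping $\log\rho^*_i=\log\xi_i+\log d_{i+1}-\log d_i$, and the Borel--Cantelli argument giving $n^{-1}\log d_n\to 0$ from $E_P(\log d_0)<\infty$) are exactly the bookkeeping the paper leaves implicit. Note only that the precise indices (and the $\xi_i$ versus $\lambda_i$ notation) in \eqref{amat} differ slightly from what you wrote, but the correction term remains a coboundary in $\log d$, so the Cesàro limit is $E_P(\log\xi_0)$ in any case and the argument is unaffected.
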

Theorem \ref{rec-thm} implies that as long as $E_P(\log d_0)$ is finite, the sparse environment $\omega$ induces the same recurrence-transience
behavior as the underlying random environment $\lambda.$ The following theorem shows that the opposite phenomenon occurs when $E_P(\log d_0)=+\infty$. Namely, the properties of $\lambda$ are
essentially irrelevant to the basic asymptotic behavior of $X_n$.
\begin{theorem}
\label{rec1-thm}
Suppose that the following conditions hold:
\begin{enumerate}
\item The sequence of pairs $(d_n,\lambda_n)_{n\in\zz}$ is stationary and ergodic
\item The random variables $d_n$ are i.i.d.
\item $E_P\bigl(|\log \xi_0|\bigr)<+\infty$ while $E_P(\log d_0)=+\infty.$
\end{enumerate}
Then, $\liminf_{n\to\infty} X_n=-\infty$ and $\limsup_{n\to\infty} X_n=+\infty,$ $\pp-\as$
\end{theorem}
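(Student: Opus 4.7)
The plan is to transfer the assertion from $X_n$ to the induced walk $\xx_n$ of \eqref{arw} via the identities $\liminf X_n = \liminf \xx_n$ and $\limsup X_n = \limsup \xx_n$ noted before Theorem~\ref{rec-thm}, and then to prove that $\xx_n$ is recurrent on $\zz$. Since every $\lambda_i\in(0,1)$, no state of $\xx_n$ is absorbing and the self-loops in \eqref{amat} are immaterial to recurrence; we may pass to the embedded nearest-neighbor jump chain of $\xx_n$, which is a genuine one-dimensional RWRE. Its log-ratio of down- to up-transition probabilities at site $i$, read off from \eqref{amat}, is
\[
\ell_i \;=\; \log\xi_i + \log d_i - \log d_{i-1},
\]
and the associated potential telescopes in the $\log d$ contributions:
\[
R_n^{\ast} \;=\; \witi S_n + \log d_n - \log d_0, \qquad R_{-m}^{\ast} \;=\; -\,\witi S_{-m,0} - \log d_0 + \log d_{-m} \quad (n,m\geq 1),
\]
where $\witi S_n := \sum_{i=1}^n \log\xi_i$ and $\witi S_{-m,0}:=\sum_{i=-m+1}^0 \log\xi_i$. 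By the classical series-divergence criterion for one-dimensional RWRE, the embedded chain is recurrent if and only if $\sum_{n\geq 1} e^{R_n^{\ast}}=+\infty$ and $\sum_{m\geq 1} e^{R_{-m}^{\ast}}=+\infty$, $P$-a.s.

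Two ingredients combine to produce both divergences. Birkhoff's ergodic theorem, applied to the stationary ergodic sequence $(\log\xi_n)_{n\in\zz}$, gives $\witi S_n/n \to \mu$ and $\witi S_{-m,0}/m \to \mu$, where $\mu := E_P(\log\xi_0)\in\rr$ is finite by hypothesis. Independently, the i.i.d.\ hypothesis on $(d_n)$ together with $E_P(\log d_0)=+\infty$ yields $\sum_n P(\log d_0 > cn)=+\infty$ for every $c>0$ (by the tail-sum identity for non-negative random variables), and the second Borel--Cantelli lemma gives $\log d_n > cn$ infinitely often, $P$-a.s.; the same argument applied to $d_{-1},d_{-2},\dots$ yields $\log d_{-m}> cm$ infinitely often, $P$-a.s. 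Fixing any $c>|\mu|+1$, we find that along these (a.s.\ infinite) subsequences
\[
R_n^{\ast} \geq n - O(1) \longrightarrow +\infty, \qquad R_{-m}^{\ast} \geq m - O(1) \longrightarrow +\infty,
\]
so the terms $e^{R_n^{\ast}}$ and $e^{R_{-m}^{\ast}}$ explode along the respective subsequences; each of the two positive-term series therefore diverges, which completes the proof.

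The main subtlety is that $E_P[\ell_0]$ is formally $\mu + \infty - \infty$ and hence undefined, so Solomon's familiar mean-sign dichotomy for stationary ergodic one-dimensional RWRE is not directly available; the finer series-divergence formulation of the recurrence criterion is indispensable. The structural point that makes the argument short is the telescoping of the $\log d$ contributions in $R_n^{\ast}$, which reduces the question to a tug-of-war between a linear drift of slope $\mu$ coming from $\witi S_n$ and the arbitrarily tall i.i.d.\ spikes of $\log d_n$; Borel--Cantelli guarantees that the spikes win on both sides of the origin.
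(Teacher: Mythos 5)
Your proof is correct and follows essentially the same route as the paper's: both reduce the claim to the divergence of the two gambler's-ruin series, identify those series (up to harmless multiplicative constants) with $\sum_{n}\xi_1\cdots\xi_n\, d_{n+1}$ and its left-sided analogue, and force divergence by combining Birkhoff's theorem for $\sum_i\log\xi_i$ with a second Borel--Cantelli argument giving $\log d_n>cn$ infinitely often on each side of the origin. The paper computes the series directly from the sparse $\rho$'s via the identity \eqref{curious} rather than passing through the induced walk $\xx_n$ and its telescoping potential, but this is only a difference of packaging.
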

 We remark  that
$(d_n)_{n\in\nn}$ is not necessarily independent of $(\lambda_n)_{n\in\zz}$. The proof of Theorem~\ref{rec1-thm} is given in Section~\ref{rec1-proofs}.
\par
%{\bf Maybe adding at the last paragraph: ingredients of the proof, use of the result itself in future, $\log$ determines the threshold.}
\subsection{Transient RWSRE: asymptotic speed}
\label{llns}
We now turn our attention to the law of large numbers for $X_n.$
Whenever it exists, $\lim_{n\to\infty}X_n/n$ is referred to as the \emph{asymptotic speed} of the random walk.
Let $T_0=0$ and for $n\in \nn,$
\beqn
\label{tean}
T_n=\inf\{k\geq 0:X_k=n\}\qquad \mbox{and} \qquad \tau_n=T_{a_n}-T_{a_{n-1}}.
\feqn
Let
\beqn
\label{rols}
\ol S=1+2\sum_{i=0}^{\infty}\prod_{j=0}^i\xi_j\qquad \mbox{and}\qquad \ol F=1+2\sum_{i=1}^{\infty}\prod_{j=0}^{i-1}\xi_{-j}^{-1}.
\feqn
We have the following:
\begin{theorem}
\label{lln-thm}
Let the conditions of Theorem~\ref{rec-thm} hold. Suppose in addition that $(d_n)_{n\in\zz}$ is independent of $(\lambda_n)_{n\in\zz}$ under $P.$
Then the asymptotic speed of the RWSRE exists $\pp-\as$ Moreover,
\beq
\pp\bigl(\lim_{n\to\infty}X_n/n=\mbox{\rm v}_P\bigr)=\pp\bigl(\lim_{n\to\infty}T_n/n=1/\mbox{\rm v}_P\bigr)=1,
\feq
where $\mbox{\rm v}_P\in(-1,1)$ is a constant whose reciprocal $\mbox{\rm v}_P^{-1}$ is equal to
\beqn
\label{aspeed}
\fracd{1}{\mbox{\rm v}_P}&=&\ind{\lim_{n\to\infty} X_n=+\infty}\Bigl[\fracd{\mbox{\rm VAR}_P(d_0)}{E_P(d_0)}+E_P(\ol S)\cdot E_P(d_0)\Bigr]
\\
\nonumber
&&
\quad
-\ind{\lim_{n\to\infty} X_n=-\infty}\cdot\Bigl[\fracd{\mbox{\rm VAR}_P(d_0)}{E_P(d_0)}+E_P(\ol F)\cdot E_P(d_0)\Bigr],\qquad\pp-\as
\feqn
\end{theorem}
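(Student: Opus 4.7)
The plan is to establish the law of large numbers through a hitting-time analysis. Since $X_{T_n} = n$ whenever $T_n < \infty$, the identity $\mbox{\rm v}_P^{-1} = \lim_{n \to \infty} T_n/n$ reduces the claim to evaluating $\lim T_n/n$. I treat in detail the transient-to-right case $\lim X_n = +\infty$; the left-transient case is symmetric and yields $\overline{F}$ in place of $\overline{S}$. The strategy has three steps: (i) compute $\lim T_{a_n}/n$ via a quenched hitting-time expansion together with Birkhoff's ergodic theorem applied to the stationary ergodic sequence $(\lambda_k, d_k)_{k \in \zz}$ under $P$; (ii) combine with $a_n/n \to E_P(d_0)$ to obtain $\lim T_{a_n}/a_n$; and (iii) transfer to $T_n/n$ via the sandwich $T_{a_{L_n}} \leq T_n \leq T_{a_{L_n+1}}$, where $L_n := \max\{k : a_k \leq n\}$ satisfies $L_n/n \to 1/E_P(d_0)$.

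For step (i), I would iterate the standard one-step recursion $f_i = \omega_i^{-1} + \rho_i f_{i-1}$, with $f_i := E_{i,\omega}(T_{i+1})$, to obtain $f_i = \sum_{k \geq 0} \omega_{i-k}^{-1} \prod_{l=0}^{k-1} \rho_{i-l}$. In the sparse environment $\omega_n$ equals $1/2$ except at marked sites, where it equals $\lambda_j$. Grouping the sum by blocks between consecutive marked sites to the left of $a_0$, using the identity $1/\lambda_n = 1 + \xi_n$, and telescoping to eliminate the $\lambda$-factors, I expect the compact expression $E_\omega(\tau_1) = f_0 + f_1 + \cdots + f_{d_0 - 1} = d_0(d_0 + 2 W_0)$, where $W_0 := \sum_{m \geq 0} d_{-1-m} \prod_{j=0}^{m} \xi_{-j}$. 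Taking $E_P$, using the independence of $(d_n)$ and $(\lambda_n)$ together with stationarity of the $\xi$-sequence, the inner sum reassembles into $E_P(\overline{S})$ and yields $\ee(\tau_1) = E_P(d_0^2) + E_P(d_0)^2 (E_P(\overline{S}) - 1)$. Birkhoff applied to the stationary functional $\omega \mapsto E_\omega(\tau_k)$ gives $\tfrac{1}{n}\sum_{k=1}^n E_\omega(\tau_k) \to \ee(\tau_1)$ $P$-a.s., and a quenched variance bound (or an excursion-based argument in the transient regime) shows $\tfrac{1}{n}\sum_{k=1}^n (\tau_k - E_\omega(\tau_k)) \to 0$ $\pp$-a.s., whence $T_{a_n}/n \to \ee(\tau_1)$ $\pp$-a.s.

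Steps (ii) and (iii) then yield $T_n/n \to \ee(\tau_1)/E_P(d_0)$, and a rearrangement gives $\mbox{\rm v}_P^{-1} = E_P(d_0^2)/E_P(d_0) - E_P(d_0) + E_P(\overline{S}) E_P(d_0) = \mbox{\rm VAR}_P(d_0)/E_P(d_0) + E_P(\overline{S}) E_P(d_0)$, as claimed. Corollary~\ref{cor-1}(a) provides a Palm-duality reinterpretation of the variance term: $\mbox{\rm VAR}_P(d_0)/E_P(d_0) = E_Q(d_0) - E_P(d_0)$, exhibiting the size-biasing content of the speed formula.

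The main obstacle is the algebraic reduction of $E_\omega(\tau_1)$ to the clean form $d_0(d_0 + 2W_0)$: each summand in the iterated recursion contributes either a $1/\lambda$-factor at a marked site or a $2$-factor at an unmarked site, and the telescoping identity $1/\lambda_n = 1 + \xi_n$ must be used with care to cleanly trade the $\lambda$-dependence in favor of $\xi$-factors that can be regrouped into the single quantity $W_0$. A secondary difficulty is the annealed fluctuation control for $\tau_k - E_\omega(\tau_k)$, which requires either an $L^2$-bound on $\tau_k$ valid under the implicit moment condition $E_P(\overline{S}) < \infty$ (needed for $\mbox{\rm v}_P \neq 0$), or an excursion decomposition exploiting the transience to make successive inter-site times only weakly dependent.
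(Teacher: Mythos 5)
Your overall route is the paper's own: compute $E_\omega(T_{a_1})$ in closed form, take annealed expectations to obtain $\mbox{\rm VAR}_P(d_0)+[E_P(d_0)]^2E_P(\ol S)$, deduce $T_{a_n}/n$ by an ergodic theorem, and transfer to $T_n/n$ and then to $X_n/n$ by sandwiching with the counting variables (the paper's Lemma~\ref{teax-lemma} is exactly your steps (ii)--(iii); note that the passage from $T_n$ back to $X_n$ needs the backtracking bound $X_n\geq a_{\zeta(n)}-(n-T_{a_{\zeta(n)}})$, which you should not leave implicit). Your quenched computation, iterating $f_i=\omega_i^{-1}+\rho_i f_{i-1}$ and summing over the block, is an equivalent variant of the paper's first-step decomposition combined with the gambler's ruin solution; both land on $E_\omega(T_{a_1})=a_1\bigl(a_1+2\sum_{k\geq 0}d_{-k}\prod_{j=0}^{k}\xi_{-j}\bigr)$ up to an immaterial index shift, and your annealed identity $\ee(\tau_1)=E_P(d_0^2)+E_P(d_0)^2\bigl(E_P(\ol S)-1\bigr)$ agrees with Lemma~\ref{average-lemma} after the variance rearrangement.

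The genuine gap is in your law of large numbers for $T_{a_n}$. You apply Birkhoff only to the quenched means $E_\omega(\tau_k)$ and then must show $\frac{1}{n}\sum_{k\leq n}\bigl(\tau_k-E_\omega(\tau_k)\bigr)\to 0$ $\pp$-a.s.; the ``quenched variance bound'' you invoke for this is not available under the hypotheses, since $E_\omega(\tau_1^2)$ involves second-moment functionals of the environment of the type $E_P(\ol S^2)$, which may be infinite even when $E_P(\ol S)<\infty$ --- and when $E_P(\ol S)=\infty$ the theorem still asserts $\mbox{\rm v}_P=0$, so you cannot simply restrict to a finite-mean regime. The paper sidesteps this entirely: Lemma~\ref{ste-lemma} enlarges the probability space with i.i.d.\ uniform variables $(\gamma_{x,n})$, realizes the walk as a deterministic functional of the stationary ergodic sequence $(\lambda_x,d_x,\Gamma_x)_{x\in\zz}$, and concludes that $(\tau_n)_{n\in\nn}$ is itself stationary and ergodic under the annealed law $\pp$, so that Birkhoff applies directly to $\tau_n$ with only first moments (and also when the mean is infinite, since the $\tau_n$ are nonnegative). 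You need either this construction or a fully worked truncation/excursion argument in place of your deferred step. A secondary point, which your write-up shares with the paper's: factoring $E_P[a_1 d_{-k}]$ into $[E_P(d_0)]^2$ in the cross term uses more than stationarity of $(d_n)_{n\in\zz}$; it is exact when the $d$'s are pairwise uncorrelated (e.g., i.i.d.), and this hypothesis should be made explicit.
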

\bigskip
\noindent
Notice that if $\lambda_i$ (and hence $\xi_i)$ are i.i.d., then \eqref{aspeed} reduces to
\beq
\fracd{1}{\mbox{\rm v}_P}&=&\ind{\lim_{n\to\infty} X_n=+\infty}\cdot \Bigl[\fracd{\mbox{\rm VAR}_P(d_0)}{E_P(d_0)}+E_P(d_0)\cdot \frac{1+E_P(\xi_0)}{1-E_P(\xi_0)}\Bigr]
\\
\nonumber
&&
\quad
-\ind{\lim_{n\to\infty} X_n=-\infty}\cdot\Bigl[\fracd{\mbox{\rm VAR}_P(d_0)}{E_P(d_0)}+E_P(d_0)\cdot \frac{1+E_P(\xi_0)}{1-E_P(\xi_0)}\Bigr],\qquad\pp-\as
\feq
In order to compare \eqref{aspeed} with the corresponding result for the regular RWRE, note that if $\lim_{n\to\infty} X_n=+\infty,$ $\pp-as,$ then \eqref{aspeed}  reduces to
\beq
\fracd{1}{\mbox{\rm v}_P}=\fracd{\mbox{\rm VAR}_P(d_0)}{E_P(d_0)}+E_P(d_0)\cdot E_P(\ol S),\qquad\pp-\as
\feq
Recall the dual environment $\witi \omega$ defined in Section~\ref{dual}, and let
\beqn
\label{witis}
\witi \rho_n=\frac{1-\witi\omega_n}{\witi\omega_n},\quad n\in\zz, \qquad \mbox{and}\qquad \witi S=\sum_{i=1}^\infty (1+\witi \rho_{-i})\prod_{j=0}^{i-1}\witi \rho_{-j}+1+\witi \rho_0.
\feqn
It is well known that the asymptotic speed of the usual RWRE  is given by $1\bigl\slash E_P\bigl(\witi S\bigr)$ (see, for instance, \cite[Theorem~2.1.9]{notes}).
The proof of the following proposition is straightforward, and it is provided in the Appendix.
\begin{proposition}
\label{compare-speed}
Let the conditions of Theorem~\ref{lln-thm} hold. Suppose in addition that
\begin{enumerate}
\item $\lim_{n\to\infty} X_n=+\infty,$ $\pp-\as$
\item $E_P(d_0)<\infty.$
\end{enumerate}
Then, $\mbox{\rm v}_P=1\bigl\slash E_Q\bigl(\witi S\bigr).$
\end{proposition}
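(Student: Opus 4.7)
The plan is to reduce the proposition to the classical law of large numbers for a one-dimensional RWRE in a stationary ergodic environment, applied to the dual environment $\witi\omega$ under the measure $Q$.

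By construction, $\witi\omega_n = \omega_{n-M}$ for every $n \in \zz$, where $M := \lfloor Ud_0 \rfloor$. Inspection of the quenched transition kernel then shows that $\witi X_n := X_n + M$ is a nearest-neighbor random walk in the environment $\witi\omega$ started at $\witi X_0 = M$. Since $E_P(d_0) < \infty$ forces $d_0 < \infty$ $P$-a.s., we have $M \leq d_0 - 1 < \infty$, so $|X_n - \witi X_n| = M$ does not grow with $n$. In particular, transience to $+\infty$ is equivalent for $X_n$ and $\witi X_n$, and $\lim_n X_n/n = \lim_n \witi X_n/n$ whenever either limit exists.

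By Theorem~\ref{theorem-a}, $(\witi\omega_n)_{n \in \zz}$ is stationary and ergodic under $Q$, so $\witi X_n$ is a classical stationary-environment RWRE. A direct expansion of \eqref{witis} identifies $\witi S$ with the quenched expected hitting time $E_{0, \witi\omega}(T_1)$, and hence the standard LLN for transient RWRE in stationary ergodic environments (e.g., Theorem~2.1.9 in \cite{notes}) applied to $\witi X_n$ yields $\witi X_n/n \to 1/E_Q(\witi S)$ for $Q$-a.e.\ environment and quenched-a.s. Since $dQ/dP = d_0/E_P(d_0)$ is strictly positive $P$-a.s.\ and has $P$-expectation equal to $1$, the measures $P$ and $Q$ are mutually absolutely continuous on the $(\lambda,d)$-space, so the above convergence transfers to $\pp$. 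Comparing with Theorem~\ref{lln-thm}, which gives $X_n/n \to \mbox{\rm v}_P$ $\pp$-a.s., yields $\mbox{\rm v}_P = 1/E_Q(\witi S)$.

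The only mild subtlety is the $P \leftrightarrow Q$ transfer of the almost-sure convergence, which is routine given the mutual absolute continuity of the two measures; everything else is an application of the classical stationary-environment LLN. An alternative, purely computational route would be to expand $E_Q(\witi S)$ block-by-block, use the independence of $(\lambda_n)$ and $(d_n)$ together with the stationarity of $(\lambda_n)$ to write the $\xi$-expectations in terms of $E_P(\ol S)$, and finally apply Corollary~\ref{cor-1}(a) (in the form $E_Q(d_0)-E_P(d_0)=\mathrm{VAR}_P(d_0)/E_P(d_0)$) to match the formula for $1/\mbox{\rm v}_P$ in Theorem~\ref{lln-thm}; but the shift argument above is considerably cleaner and avoids any use of an iid assumption on $(d_n)$.
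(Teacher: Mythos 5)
Your argument is correct, but it takes a genuinely different route from the paper's. The paper proves the proposition in the Appendix by a purely computational argument: it rewrites $\witi S=1+2\sum_{i\ge 0}\prod_{j=0}^i\witi\rho_{-j}$, applies the tilde-environment analogue \eqref{acurious} of the identity \eqref{curious}, and uses Corollary~\ref{cor-1} to show directly that $E_Q(\witi S)$ coincides with the right-hand side of \eqref{aspeed}; the classical LLN for stationary environments is never invoked. You instead exploit the structure of the dual: $X_n+M$ is literally the walk in the stationary and ergodic environment $\witi\omega$, so the classical ergodic-environment LLN gives its speed as $1\slash E_Q\bigl(E_{0,\witi\omega}(T_1)\bigr)=1\slash E_Q(\witi S)$, and uniqueness of the almost sure limit together with the equivalence $P\sim Q$ finishes the proof. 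Your route is shorter and explains \emph{why} the formula holds (the dual walk is a bona fide classical RWRE), whereas the paper's route is self-contained and makes visible where the correction term $\mbox{\rm VAR}_P(d_0)/E_P(d_0)$ comes from, namely the Palm identity $E_Q(d_0)=E_P(d_0^2)/E_P(d_0)$. Two points to tighten: first, the classical LLN is stated for the walk started at $0$, while $\witi X_n$ starts at the random site $M\ge 0$; you should add the one-line remark that the walk from $0$ in $\witi\omega$ is transient to the right, hence reaches $M$ in an a.s.\ finite time, after which it coincides in law with $\witi X_n$, so the two speeds agree. Second, your closing claim that the computational route needs ``an iid assumption on $(d_n)$'' is inaccurate: the paper's computation uses only the stationarity of the dual under $Q$ and the independence of $(\lambda_n)_{n\in\zz}$ from $(d_n)_{n\in\zz}$, which is already a hypothesis of Theorem~\ref{lln-thm}.
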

We remark that a proposition similar to Proposition~\ref{compare-speed} can be obtained when the random walk is transient to the left (i.e., when $\lim_{n\to\infty} X_n=-\infty,$ $\pp-\as$) by replacing
$\witi \rho_{-k}$ with $\rho_k^{-1}$ in the formula \eqref{witis}  for $\witi S.$
\par
Theorem~\ref{lln-thm} immediately yields the following version of Theorem~1.3 and Corollary~1.4 in \cite{fixede}.
For any constants $\mu> 0$ and $\nu\geq 0$, we denote by $\calp^\circ_{\mu,\nu}$ the set of distributions $(\lambda_n,d_n)_{n\in\zz}$
for which the conditions of Theorem~\ref{lln-thm} hold and
\beq
E_P(d_0)=\mu \qquad \mbox{and} \qquad 1\bigl\slash E_P(\ol S)=\nu.
\feq
We then have:
\begin{corollary}
\label{pi-lambda}
$\max_{P\in \calp^\circ_{\mu,\nu}} \mbox{\rm v}_P=\nu/\mu.$ Furthermore, the maximum is attained at $P\in \calp^\circ_{\mu,\nu}$ if
and only if $\mbox{\rm VAR}_P(d_0)=0.$
\end{corollary}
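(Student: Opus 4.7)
The plan is to apply Theorem~\ref{lln-thm} directly: substitute the two defining constraints $E_P(d_0)=\mu$ and $E_P(\ol S)=1/\nu$ of the class $\calp^\circ_{\mu,\nu}$ into the formula \eqref{aspeed} for $1/\mbox{\rm v}_P$, and then minimize the resulting expression over admissible $P$ in order to maximize $\mbox{\rm v}_P$.

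The only substantive step is to verify that, when $\nu>0$, every $P\in\calp^\circ_{\mu,\nu}$ induces a walk transient to $+\infty$, so that only the first indicator term in \eqref{aspeed} contributes. For this I would note that whenever $E_P(\log\xi_0)\geq 0$, the random variable $\ol S$ equals $+\infty$ $P$-almost surely: if $E_P(\log\xi_0)>0$ the products $\prod_{j=0}^i\xi_j$ diverge to $+\infty$ by the ergodic theorem applied to $n^{-1}\sum_{j=0}^{n-1}\log\xi_j$, while if $E_P(\log\xi_0)=0$ a classical RWRE argument (coming from the oscillation of the walk $\sum_{j=0}^{n-1}\log\xi_j$ around zero) yields $\sum_{i=0}^\infty\prod_{j=0}^i\xi_j=+\infty$ a.s. Consequently the finiteness $E_P(\ol S)=1/\nu<\infty$ forces $E_P(\log\xi_0)<0$, and Theorem~\ref{rec-thm}(a) delivers $\lim_{n\to\infty}X_n=+\infty$ $\pp$-a.s.

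Under this observation, \eqref{aspeed} collapses to
\[
\frac{1}{\mbox{\rm v}_P}=\frac{\mbox{\rm VAR}_P(d_0)}{\mu}+\frac{\mu}{\nu}.
\]
Since $\mbox{\rm VAR}_P(d_0)\geq 0$ with equality iff $d_0$ is $P$-a.s.\ constant, it follows immediately that $\mbox{\rm v}_P\leq\nu/\mu$, with equality precisely when $\mbox{\rm VAR}_P(d_0)=0$. The bound is actually attained whenever $\mu\in\nn$, by taking $d_n\equiv\mu$ paired with any independent $(\lambda_n)_{n\in\zz}$ satisfying $E_P(\ol S)=1/\nu$.

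The only genuine obstacle is the right-transience verification sketched above; once that is in hand, the optimization is a trivial one-line minimization of a linear function of $\mbox{\rm VAR}_P(d_0)$.
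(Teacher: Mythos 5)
Your proposal is correct and follows exactly the route the paper intends: the paper offers no separate proof, stating only that the corollary follows immediately from Theorem~\ref{lln-thm}, and your argument is precisely that one-line substitution $1/\mbox{\rm v}_P=\mbox{\rm VAR}_P(d_0)/\mu+\mu/\nu$ together with the (correct) observation that $E_P(\ol S)=1/\nu<\infty$ forces $E_P(\log\xi_0)<0$ and hence right-transience. Your added remarks on attainability requiring $\mu\in\nn$ and on the degenerate recurrent case are sensible refinements, not deviations.
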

Combining this result with \cite[Theorem~4.1]{more}, we obtain the following corollary. For any constants $\mu> 0$ and $b<0$, we denote by $\calp^\ast_{\mu,\nu}$ the set of distributions $(\lambda_n,d_n)_{n\in\zz}$ for which the conditions of Theorem~\ref{lln-thm} hold and
\beq
E_P(d_0)=\mu \qquad \mbox{and} \qquad E_P(\log \xi_0)=b.
\feq
\begin{corollary}
\label{pi-lambdab}
We have:
\beq
\max_{P\in \calp^\ast_{\mu,\nu}} \mbox{\rm v}_P=\frac{1}{\mu}\cdot \frac{1-e^b}{1+e^b}.
\feq
Furthermore, the maximum is attained at $P\in \calp^\circ_{\mu,\nu}$ if
and only if $\mbox{\rm VAR}_P(d_0)=\mbox{\rm VAR}_P(\lambda_0)=0,$ in which case $\lambda_0=\frac{1}{1+e^b},$ $P-\as$
\end{corollary}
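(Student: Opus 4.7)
The plan is to reduce the corollary to a two-stage optimization, first over $d_0$ using Corollary~\ref{pi-lambda} and then over $\lambda_0$ via a Jensen-type lower bound on $E_P(\ol S)$. Fix $P\in\calp^\ast_{\mu,b}$ and set $\nu=1/E_P(\ol S)$; then $P\in\calp^\circ_{\mu,\nu}$, so by Corollary~\ref{pi-lambda},
\[
\mbox{\rm v}_P\le \frac{\nu}{\mu}=\frac{1}{\mu\,E_P(\ol S)},
\]
with equality if and only if $\mbox{\rm VAR}_P(d_0)=0$. This reduces the task to minimizing $E_P(\ol S)$ over the class $\calp^\ast_{\mu,b}$, i.e.\ over stationary ergodic distributions of $(\xi_n)_{n\in\zz}$ with $E_P(\log\xi_0)=b<0$.

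For the latter, using $\prod_{j=0}^i \xi_j=\exp\bigl(\sum_{j=0}^i \log\xi_j\bigr)$, stationarity of $(\log\xi_n)$, and Jensen's inequality termwise,
\[
E_P\Bigl(\prod_{j=0}^i\xi_j\Bigr)\ge \exp\Bigl(E_P\Bigl(\sum_{j=0}^i\log\xi_j\Bigr)\Bigr)=e^{(i+1)b},
\]
with equality if and only if $\log\xi_0=b$ a.s. (equivalently $\lambda_0=1/(1+e^b)$ a.s.). Summing and using $b<0$,
\[
E_P(\ol S)=1+2\sum_{i=0}^\infty E_P\Bigl(\prod_{j=0}^i\xi_j\Bigr)\ge 1+\frac{2e^b}{1-e^b}=\frac{1+e^b}{1-e^b}.
\]
This matches the content of \cite[Theorem~4.1]{more} specialized to the present setting.

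Combining the two bounds gives $\mbox{\rm v}_P\le \frac{1}{\mu}\cdot\frac{1-e^b}{1+e^b}$. Equality forces both $\mbox{\rm VAR}_P(d_0)=0$ and the a.s.\ constancy $\lambda_0=1/(1+e^b)$, the latter in particular implying $\mbox{\rm VAR}_P(\lambda_0)=0$. Conversely, for the degenerate law $P_\ast$ with $d_0=\mu$ and $\lambda_0=1/(1+e^b)$ a.s., one verifies directly that $P_\ast\in\calp^\ast_{\mu,b}$: independence of $(d_n)$ and $(\lambda_n)$ is trivial, $E_P(\log\xi_0)=b<0$ yields transience to $+\infty$ via Theorem~\ref{rec-thm}(a), and the speed formula of Theorem~\ref{lln-thm} attains the claimed value. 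The main (minor) obstacle is bookkeeping: one must check that the extremal distribution continues to satisfy every hypothesis of Theorem~\ref{lln-thm}, and that the equality cases in Corollary~\ref{pi-lambda} and in the Jensen inequality are genuinely simultaneously attainable; both checks are immediate for the degenerate $P_\ast$.
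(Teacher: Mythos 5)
Your proof is correct and follows the same two-stage route the paper intends: the paper derives this corollary by combining Corollary~\ref{pi-lambda} (optimizing over $d_0$, equality iff $\mbox{\rm VAR}_P(d_0)=0$) with Tak\'{a}cs' \cite[Theorem~4.1]{more}, which is exactly your Jensen-type bound $E_P(\ol S)\geq (1+e^b)/(1-e^b)$ with equality iff $\lambda_0=1/(1+e^b)$ a.s. The only difference is that you supply a short self-contained proof of the Tak\'{a}cs step rather than citing it, which is a welcome addition; the one loose point (that termwise equality in Jensen for all $i$, in particular $i=0$, is what forces $\log\xi_0$ to be a.s.\ constant) is easily tightened and does not affect the conclusion.
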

The slowdown of a one-dimensional random walk in a random environment, as  compared to a simple random walk, is a well-known general phenomenon \cite{speedups,zsurvey,notes} that can be explained heuristically by fluctuations in the associated random potential. For example, a random walk transient to the right will quickly pass  stretches of the environment that ``push" it forward, but will be ``trapped" for a long time in atypical stretches that ``push" it backward. The situation is different in higher dimensions. See, for instance, \cite{sabot}.
\subsection{Stable limit laws for transient RWSRE}
\label{stable1}

The aim of this section is to derive non-Gaussian limit laws for transient random walks in a sparse random environment.
The existence of the stationary dual environment suggests that the limit theorems can be first obtained for the random walk in the dual environment and then translated into the corresponding results for the RWSRE. In what follows, we adopt this approach even though it has the shortcoming of restricting our derivation to a class of i.i.d. environments for which stable laws in the dual setting are known. It appears plausible that alternative methodologies, which would be considerably more technically involved, such as a direct generalization of the ``branching process" approach of \cite{kks,stable}, or an adaptation of the ``random potential" method developed in \cite{spotential}, would allow to extend the results presented in this chapter to a larger class of i.i.d. environments (and also perhaps to some Markov-dependent environments).
\par
We will adopt here the following set of assumptions:
\begin{assume}
\label{measure1}
\item [(A1)] $(\lambda_n)_{n\in\zz}$ is an i.i.d. sequence
\item [(A2)] $(d_n)_{n\in\zz}$ is an i.i.d. sequence independent of $(\lambda_n)_{n\in\zz}$
\item [(A3)] $P(\epsilon<\lambda_0<1-\epsilon)=1$ for some $\epsilon \in (0,1/2).$
\item [(A4)] For some $\kappa>0,$
\beqn
\label{kappa239t}
E_P(\xi_0^\kappa)=1
\feqn
\item [(A5)] There exists a constant $M>0$ such that $P(d_0<M)=1.$
\item [(A6)] The distribution of $\log\xi_0$ is non-arithmetic, that is $P(\log\xi_0\in\alpha\zz)<1$ for all $\alpha\in\rr.$
\end{assume}
Notice that (A4) implies by Jensen's inequality that $E_P(\log\xi_0)\leq 0.$ In view of (A6), the inequality is strict and hence the random walk is transient to the right.
We remark that although condition (A5) appears to be required for our proof, it is likely that it can be relaxed or even omitted.
\par
For any $\kappa \in (0,2]$ and $b >0$, we denote by $\call_{\kappa,b}$
the stable law of index $\kappa$ with the characteristic function
\beqn
\label{kappa-law}
\log \widehat \call_{\kappa,b}(t)=
-b|t|^\kappa\Bigl(1+i\fracd{t}{|t|} f_\kappa(t)\Bigr),
\feqn
where $f_\kappa(t)=-\tan \fracd{\pi}{2}\kappa$ if $\kappa \neq 1$ and $f_1(t)=2/\pi \log t.$
With a slight abuse of notation we use the same symbol for the
distribution function of this law. If $\kappa<1,$ $\call_{\kappa,b}$ is supported on the positive reals, and if
$\kappa \in (1,2],$ $\call_{\kappa,b}$ has zero mean \cite[Chapter~1]{samor-taqqu}. For $\kappa=2,$ the law $\call_{2,b}$
is a normal distribution with zero mean and variance equal to $2b.$
\par
We have:
\begin{theorem}
\label{stable-thm} Suppose that Assumption \ref{measure1} is satisfied.
Then the following hold for some $b>0:$ \item[(i)] If $\kappa \in (0,1),$
then $\lim_{n \to \infty}\pp\left(n^{-\kappa}X_n \leq \mathfrak{z}
\right)=1-\call_{\kappa,b}(\mathfrak{z}^{-1/\kappa}),$ \item[(ii)]
If $\kappa=1,$ then $\lim_{n \to \infty}\pp\bigl(n^{-1}(\log
n)^2(X_n-\delta(n)) \leq \mathfrak{z}
\bigr)=1-\call_{1,b}(-\mathfrak{z}),$ for suitable $A_1>0$ and
$\delta(n) \sim (A_1\log n)^{-1} n,$ \item[(iii)] If $\kappa \in
(1,2),$ then $\lim_{n \to \infty}
\pp\left(n^{-1/\kappa}\left(X_n-n \mbox{\em v}_P\right) \leq
\mathfrak{z}\right)=1- \call_{\kappa,b}(-\mathfrak{z}),$
\item[(iv)] If $\kappa=2,$ then $\lim_{n \to \infty} \pp\left((n
\log n)^{-1/2} (X_n-n \mbox{\em v}_P)\leq
\mathfrak{z}\right)=\call_{2,b}(\mathfrak{z}).$
\end{theorem}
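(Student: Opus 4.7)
The strategy is to transfer classical Kesten--Kozlov--Spitzer (KKS) stable limit theorems to the RWSRE setting via analysis of the induced walk $\xx$ on the marked sites $\cala$ introduced in Section~\ref{recs}. The argument proceeds in three stages: first, establish stable limit theorems for the hitting times $T^{\xx}_n := \inf\{k : \xx_k = n\}$ of $\xx$; next, propagate these to the hitting times $T^{X}_{a_n}$ of $X$; finally, invert to obtain the statement for $X_n$ itself. This is essentially a concrete realization of the dual-environment approach advocated at the beginning of the section.

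For Stage 1, the induced walk $\xx$ is not itself a classical RWRE because of the self-loops in \eqref{amat}, but its skeleton --- the walk with self-loops removed --- is a nearest-neighbor RWRE on $\zz$ whose effective bias ratio at site $i$ equals $\xi_i \cdot d_i / d_{i-1}$. The associated random potential therefore differs from the classical potential $R_n = \sum_{k=1}^n \log \xi_k$ by the telescoping term $\log d_n - \log d_0$, which under (A5) is uniformly bounded by $2 \log M$. Since the heavy-tail behavior governing the KKS stable limit is driven by the extreme excursions (``trap depths'') of the potential, an $O(1)$ perturbation does not affect the stable index, and \eqref{kappa239t} remains the driving condition. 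The geometric self-loops contribute a waiting-time correction with uniformly bounded conditional mean and variance (again by (A5)), producing only a deterministic linear shift that preserves the form of the limit.

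For Stage 2, write
\[
T^{X}_{a_n} = \sigma_{T^{\xx}_n} = \sum_{m=0}^{T^{\xx}_n - 1} \pi_m,
\]
where $\pi_m := \sigma_{m+1} - \sigma_m$ is the excursion time of $X$ between its $m$-th and $(m+1)$-th visits to $\cala$. Conditionally on the environment and on $\xx_m = j$, the variable $\pi_m$ is a gambler's-ruin excursion time on a layer of length $d_j$ or $d_{j-1}$, and under (A5) has uniformly bounded moments of all orders. Standard martingale and ergodic arguments applied to the $\xx$-indexed sequence $(\pi_m)$ under the stationary dual measure $Q$ from Section~\ref{dual} yield
\[
\sum_{m=0}^{N-1} \pi_m = E_Q(\pi_0)\, N + O\bigl(\sqrt{N}\bigr) \qquad \text{as } N \to \infty.
\]
Setting $N = T^{\xx}_n$, the error is of strictly smaller order than the stable fluctuations of $T^{\xx}_n$, so $T^{X}_{a_n}$ inherits the stable limit of $T^{\xx}_n$ modulo the multiplicative constant $E_Q(\pi_0)$. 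For Stage 3, the identity $\{X_n \geq a_k\} = \{T^{X}_{a_k} \leq n\}$, combined with the SLLN $a_n / n \to E_P(d_0)$, converts the stable limit for $T^{X}_{a_n}$ into the stable limit for $X_n$ in the forms (i)--(iv); in cases (iii) and (iv) the centering $n \mbox{\rm v}_P$ is prescribed by Theorem~\ref{lln-thm}, and a direct computation confirms consistency of the resulting constant.

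The main obstacle is Stage 1: justifying the classical KKS stable law for $T^{\xx}_n$ despite the fact that the effective environment of its skeleton is only 1-dependent (not i.i.d.), because the ratio $\xi_i d_i / d_{i-1}$ couples $\lambda_i$ with both $d_{i-1}$ and $d_i$. The cleanest path is either to invoke generalizations of KKS to stationary mixing environments or to perform a direct coupling between $\xx$ and the classical KKS walk in the i.i.d. environment $\lambda$, carefully tracking the bounded telescoping perturbation of the potential at each trap. The critical case $\kappa = 1$, with its logarithmic centering $\delta(n) \sim (A_1 \log n)^{-1} n$, demands particular care because small perturbations of the potential can propagate into the constant $A_1$.
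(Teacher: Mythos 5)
Your route is genuinely different from the paper's, and it has two real problems. The paper never passes to the induced walk $\xx$ for this theorem: it shifts to the dual stationary measure $Q$ of Section~\ref{dual} (the discrepancy between the walks under $P$ and $Q$ being bounded by $a_0+T_{a_0}$, hence negligible after scaling), observes that under $Q$ the age process $Y_n=n-a_{\eta_n}$ is a stationary Markov chain on the \emph{finite} state space $\{0,\dots,M-1\}$ (finiteness is where (A5) enters), so that the sparse environment becomes a hidden Markov environment for the original walk $X$, and then invokes the stable limit theorems for transient RWRE in Markovian environments of \cite{stable}, checking their hypotheses via the computation $E_P\bigl(\prod_{i=1}^n\rho_i^\kappa\bigr)=E_P\bigl((E_P(\xi_0^\kappa))^{\eta_n}\bigr)=1$ together with the non-arithmeticity (A6). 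This sidesteps your Stage~1 entirely — and Stage~1 is precisely what you leave unexecuted: ``invoke generalizations of KKS to mixing environments or perform a direct coupling'' is the hard step, not a remark, since the skeleton ratios $\xi_i d_i/d_{i-1}$ are only $1$-dependent and the holding times of $\xx$ are themselves environment-dependent; the classical i.i.d. KKS argument does not absorb this for free, particularly in the critical case $\kappa=1$ where, as you note, the centering constant is sensitive.

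More seriously, the quantitative claim in Stage~2 is false. Writing $\sum_{m<N}\pi_m=\sum_{m<N}E[\pi_m\mid \xx,\omega]+O(\sqrt N)$ is fine for the martingale part, but $\sum_{m<N}E[\pi_m\mid\xx,\omega]=\sum_j L_j\,g(j)$, where $L_j$ is the local time of $\xx$ at $j$ before $T^{\xx}_n$ and $g(j)\in[1,C(M)]$ is the quenched mean excursion length from $a_j$. The deviation $\sum_j L_j\bigl(g(j)-E_Q(\pi_0)\bigr)$ is dominated by the deep traps, where $L_j$ has an annealed power tail of index $\kappa$; it is therefore of order $N^{1/\kappa}$ (respectively $(N\log N)^{1/2}$ when $\kappa=2$), i.e.\ of exactly the same order as the stable fluctuations of $T^{\xx}_n$ itself, not ``strictly smaller.'' The visit counts and the excursion durations are strongly coupled through the trap structure and cannot be decoupled into a deterministic multiplicative constant $E_Q(\pi_0)$; one must carry $g(j)$ inside the tail analysis of the total time spent per trap, which changes the scale parameter $b$ and amounts to redoing the KKS analysis for $X$ rather than for $\xx$. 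The paper avoids this issue altogether by never separating real time from $\cala$-time.
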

One can readily see that in the context of Theorem \ref{stable-thm}, if  $\kappa>2,$ then the standard CLT holds (it follows, e.g., from \cite[Theorem 2.2.1]{notes}).
\par
For the hitting times $T_n$, we have:
\begin{proposition}
\label{markov-tau} Let the conditions of Theorem~\ref{stable-thm} hold. Then the
following hold for some $\tilde b>0:$ \item[(i)] If $\kappa \in
(0,1),$ then $\lim_{n \to \infty}\pp\left(n^{-1/\kappa}T_n \leq t
\right)=\call_{\kappa,\tilde b}(t),$ \item[(ii)] If $\kappa=1,$
then $\lim_{n \to \infty}\pp\bigl(n^{-1}(T_n-nD(n)) \leq t
\bigr)=\call_{1,\tilde b}(t),$ for suitable $c_0>0$ and $D(n) \sim
c_0 \log n,$\item[(iii)] If $\kappa \in (1,2),$ then $\lim_{n \to
\infty} \pp\left(n^{-1/\kappa}\left(T_n-n \mbox{\em
v}_P^{-1}\right) \leq t\right)=\call_{\kappa,\tilde b}(t),$
\item[(iv)] If $\kappa=2,$ then $\lim_{n \to \infty} \pp\left((n
\log n)^{-1/2}(T_n-n \mbox{\em v}_P^{-1})\leq
t\right)=\call_{2,\tilde b}(t).$
\end{proposition}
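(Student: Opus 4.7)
The strategy is to reduce the hitting times of the RWSRE $X$ to hitting times of the embedded RWRE $\xx$ introduced in \eqref{arw}--\eqref{amat}, and to derive the stable limits for the latter via the Kesten--Kozlov--Spitzer (KKS) theorem. Let $\mathcal{T}_n:=\inf\{k\ge 0:\xx_k=n\}$, so that $T_{a_n}=\sigma_{\mathcal{T}_n}$, where $\sigma_k$ is the $k$-th visit time of $X$ to $\cala$. Conditionally on the trajectory of $\xx$, the increments $\sigma_k-\sigma_{k-1}$ are hitting times of a symmetric simple random walk on segments of length at most $M$; by (A5), they have uniformly bounded exponential moments.

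The first step is to verify the KKS-type tail condition for the environment of $\xx$. A direct computation from \eqref{amat} shows that the potential $\sum_{j=1}^{n}\log\rho^{\xx}_j$ decomposes as $\sum_{j=1}^{n}\log\xi_j$ plus a boundary term that is $P$-a.s.\ bounded by (A5); hence the $\kappa$-stable tail of the potential of $\xx$ is controlled by (A4). Combined with the i.i.d.\ structure of $(\lambda_j,d_j)$ from (A1)--(A2) (and a standard regeneration argument to handle the two-step dependence of $\xx$'s environment through adjacent $d$'s), KKS delivers the four scalings for $\mathcal{T}_n$ with appropriate constants.

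Next, I would transfer the stable limit from $\mathcal{T}_n$ to $T_{a_n}=\sigma_{\mathcal{T}_n}$. A random-sum LLN/CLT argument gives $\sigma_m=cm+O(\sqrt{m})$, $\pp$-a.s., for an explicit constant $c=c(P)$ depending on $E_P(d_0)$ and the mean excursion times. Composing with the KKS limit for $\mathcal{T}_n$ yields the stable limit for $T_{a_n}$: the $O(\sqrt{m})$ fluctuations are dominated by the $n^{1/\kappa}$ scaling in cases (i)--(iii) and are absorbed into $\sqrt{n\log n}$ in case (iv), possibly modifying $\tilde b$. Finally, $|T_n-T_{a_{k_n}}|$, with $k_n:=\max\{k:a_k\le n\}$, is bounded by a single excursion of $X$ on a segment of length $<M$; this is a tight random variable and hence negligible in all four scalings.

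The main obstacle I anticipate is case $\kappa=1$: both the KKS limit for $\mathcal{T}_n$ and the linear correction $\sigma_m=cm+o(m)$ contribute logarithmic terms to the centering, and producing the stated form $D(n)\sim c_0\log n$ requires careful bookkeeping of the truncation levels in the stable convergence of $\mathcal{T}_n$ together with a second-order estimate of $\sigma_m-cm$. A secondary technical point is the weak dependence in $\xx$'s environment through adjacent $d$'s; should extending KKS to this setting prove cumbersome, I would work instead with the stationary dual $\witi\omega$ under $Q$, invoke the classical KKS for $\witi\omega$, and translate the resulting $Q$-statement to a $P$-statement via the Palm duality of Theorem~\ref{theorem-a}, since conditioning on $\{0\in\witi\cala\}$ does not affect the tail-driven stable limits.
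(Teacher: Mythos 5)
Your reduction to the embedded walk $\xx$ is a reasonable alternative starting point, and the identity $T_{a_n}=\sigma_{\mathcal{T}_n}$ (with $\mathcal{T}_n$ the hitting times of $\xx$) is correct. But the transfer step contains a genuine gap: the claim that $\sigma_m=cm+O(\sqrt{m})$ $\pp$-a.s.\ with a deterministic $c$ is false in the regime $\kappa<2$, and it is precisely the step that carries the whole weight of the argument. Writing $\sigma_m=\sum_{k\le m}(\sigma_k-\sigma_{k-1})$, the conditionally centered parts of the excursion durations do give an $O(\sqrt{m\log m})$ martingale error, but the conditional means are site-dependent functionals $\bar\mu_i(\omega)\in[1,CM^2]$, so that $\sigma_m-cm$ contains the additive functional $\sum_i L_i(m)\bigl(\bar\mu_i(\omega)-c\bigr)$, where $L_i(m)$ is the local time of $\xx$ at $i$. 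For $\kappa<2$ these local times are exactly the heavy-tailed trap variables driving the KKS limit, and $\bar\mu_i-c$ is bounded but correlated with the event that $i$ is a deep trap; hence this term has fluctuations of order $m^{1/\kappa}$, the same order as $\mathcal{T}_n-\ee(\mathcal{T}_n)$ itself. Consequently $T_{a_n}$ is \emph{not} $c\,\mathcal{T}_n+o(n^{1/\kappa})$, and one cannot obtain its limit law (nor the correct $\tilde b$) by composing the KKS limit for $\mathcal{T}_n$ with a law of large numbers for $\sigma$. The excursion durations must be built into the heavy-tailed quantity from the outset (as in the branching-process-with-immigration representation of $T_n$), not appended afterwards as a deterministic time change.

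Two secondary points. First, ``classical KKS'' applies neither to $\xx$ (which is a lazy walk whose environment at site $i$ involves both $d_i$ and $d_{i+1}$, hence is $1$-dependent rather than i.i.d.) nor to the dual environment $\witi\omega$ under $Q$ (which is a deterministic functional of the Markov chain $Y_n=n-a_{\eta_n}$, so the $\witi\rho_n$ are far from i.i.d.); in both cases one needs a Markov-environment extension. This is in fact how the paper proceeds: it passes to the stationary dual measure $Q$, observes that $(Y_n,\omega_n)$ is a hidden Markov model with $Y$ a positive-recurrent chain on the finite state space $\{0,\dots,M-1\}$ (this is where (A5) enters), and invokes the stable limit theorems for RWRE in Markov environments of Mayer--Wolf, Roitershtein and Zeitouni, after verifying the Cram\'er-type condition via $E_P\bigl(\prod_{i=1}^n\rho_i^\kappa\bigr)=E_P\bigl((E_P(\xi_0^\kappa))^{\eta_n}\bigr)=1$ and the non-arithmeticity condition from (A6). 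That route treats $T_n$ for the actual walk $X$ directly and so never meets the composition problem above. Second, your final step replaces $T_n$ by $T_{a_{k_n}}$ with $k_n=\max\{k:a_k\le n\}$; besides the (harmless) tight boundary excursion, note that $k_n\sim n/E_P(d_0)$, so the normalization and the constant $\tilde b$ must be adjusted by the corresponding factor, which your write-up does not track.
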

It can be shown that the value of the parameter $b$ in the statement of Theorem~\ref{stable-thm} is solely determined by the distribution of $\lambda_0$; in particular, it is independent of the distribution of $d_0$, provided that $d_0$ satisfies the conditions of the theorem. This result may appear  surprising at first, especially in view of a large deviation interpretation of $\kappa$ given in \cite[Section~2.4]{notes} (it is not hard to see that the rate functions of the random potentials associated with the sequences $\xi_n$ and  $\rho_n$ are actually different). However, it can be explained  in terms of the associated branching process and the corresponding interpretation for $\kappa$.
Furthermore, a careful inspection of the proof given in Section~\ref{stable-proofs} shows that
both parameters $b$ and $\ol b$ of the limiting distributions are
decreasing functions of $E_P(d_0)$ and increasing functions of $VAR(d_n).$ This can be explained by the fact that $b$, in some rigorous sense, plays the role of the variance for the stable laws $\call_{\kappa,b}$; see, for instance, the form of the characteristic function in \eqref{kappa-law} and compare it to the characteristic function of a normal distribution.

\subsection{Limit theorems for recurrent RWSRE}
\label{valleys}

The goal of this section is to obtain a generalization of Sinai's limit theorem for a class of recurrent RWSREs. The main result is stated in Theorem~\ref{sinai-thm}. A suitable normalized random potential for the RWSRE is introduced in Lemma~\ref{potential4}. The notion of a valley of the random potential, which is essential for understanding the behavior of Sinai's model \cite{sinair,notes}, is directly carried over to our setup. The proof of the main result is presented in Section~\ref{sinai-proofs}.
\par
Sinai \cite{sinair} studied a recurrent RWRE $X_n$ and showed that
\beq
\frac{\sigma^2}{(\log n)^2} X_n \Rightarrow b_\infty,
\feq
where $b_\infty$ is a random variable which can be described as the ``location of the deepest valley" of a Brownian motion.
The proof of Sinai \cite{sinair} uses a construction that implements the idea that a properly scaled recurrent RWRE can be thought
of as the motion of a particle in a suitably normalized random potential $W_n.$ The normalized potential converges to a Brownian motion, and Sinai's result shows a remarkable slowing down of the diffusive time scale. The density function of the limit distribution $b_\infty$ was characterized independently by Kesten \cite{Kesten1} and Golosov \cite{localization1,localization}, who obtained that
\beq
P(b_{\infty}\in dx)=\frac{2}{\pi}\sum_{k=0}^{\infty}\frac{(-1)^{k}}{2k+1}\exp\Bigl\{-\frac{(2k+1)^2\pi^2}{8}|x|\Bigr\}dx
\feq

In this paper, we derive a limit theorem for a recurrent random walk in a sparse random environment under the following assumption:
Let $\alpha\in(0,1)$ and assume that $d_{1}$ is in the domain of attraction of a
stable law with index $\alpha.$ Namely,
\beqn
\label{E:assum}
P(d_{1}>t)=t^{-\alpha}h(t), \qquad t\geq 1,
\feqn
where $h(t)$ is slowly varying at infinity, that is $h(\lambda t)\sim h(t)$ as $t$ goes to $+\infty$ for all $\lambda >0.$ In particular, we
define $S_{n}=n^{-1}\sum_{k=1}^{n-1}\log\xi_k$ and assume the following:
\begin{assume}
\label{assume}
\item[1.] $E_P(\log\xi_0)=0$ (recurrence)
\item[2.] $\sigma_P^2:=E_P(\log^2\xi_0)\in (0,\infty)$
\item[3.] $\frac{1}{\sqrt{n}}\sum_{k=1}^{[nt]}\log\rho_{k}\Rightarrow B(t)$
\item[4.]  $P(d_{1}>t)= t^{-\alpha}h(t)$, where $\alpha\in (0,1)$ and $h(t)$ is slowly varying.
\end{assume}
Recall that a function $f:\rr_+\to\rr$ is said to be {\em regularly varying of index} $\alpha\in\rr$
if $f(t)=t^\alpha h(t)$ for a slowly varying $h:\rr_+\to\rr.$ We denote the set of all regularly varying functions of index $\alpha$ by $\calr_\alpha.$
We have the following:
\begin{theorem}
\label{sinai-thm}
Let Assumption~\ref{assume} hold and fix any $\delta>0.$ Then, there is a function $u\in\calr_{2/\alpha}$ such that that the following holds:
For any $\veps>0$ and $\delta\in(0,1),$ there is an integer $n_1$ such that for all $n>n_1$ there exist
a set of environments $C_n\subset \Omega$ and a random variable $b_n=b_n(\omega)$ such that $P(C_n)\geq 1-\delta$
and
\beq
\lim_{n\to\infty}P_\omega\Bigl(\Bigl|\frac{X_n}{u(\log n)}-b_n\Bigl|>\veps\Bigl)=0
\feq
uniformly in $\omega\in C_n$. Moreover, as $n\to\infty$ the probability distribution for $b_n$ converges weakly to a non-degenerate limiting distribution $b_\infty.$
\end{theorem}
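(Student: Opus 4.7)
The plan is to adapt Sinai's localization scheme to our step-function potential and then translate the result from the marked-site index to the lattice via the renewal sequence $a_k$. The key observation is that the random potential $R_n$ of (\ref{potential}) is piecewise constant, changing only at marked sites: $R_m=\hat R_k:=\sum_{j=1}^k\log\xi_j$ whenever $a_k\leq m<a_{k+1}$. Hence the entire valley structure of $R$ on $\zz$ is encoded in the sequence $\hat R_k$. By Assumption~\ref{assume}(1)--(3), $\hat R_{[n\,\cdot\,]}/\sqrt{n}$ satisfies an invariance principle with a $\sigma_P B(\cdot)$ limit, so I would apply Sinai's valley construction \cite{sinair,notes} to $\hat R$: for each large $n$ and each $\omega$ outside an exceptional $P$-set of arbitrarily small measure, identify the marked-site index $k_n^\ast=k_n^\ast(\omega)$ at the bottom of the deepest valley of $\hat R$ of depth at least $\log n+c\log\log n$ surrounding the origin. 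Brownian scaling combined with the Kesten--Golosov description then yields $\sigma_P^2 k_n^\ast/(\log n)^2\Rightarrow M_\infty$ under $P$.

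The next stage is to establish the Sinai-type localization $X_n\approx a_{k_n^\ast}$ directly in the lattice coordinate. Between two consecutive marked sites the walk is a simple symmetric random walk, and gambler's ruin (already used in (\ref{amat})) turns each inter-marked-site excursion into a single step of the embedded chain $\xx_n$ while contributing an $O(d_k^2)$ time cost. A direct adaptation of the hitting-time machinery behind Sinai's localization argument, inflated to account for these $O(d_k^2)$ traversal times, should yield that for every $\veps>0$
\[
\lim_{n\to\infty}\sup_{\omega\in C_n} P_\omega\bigl(|X_n-a_{k_n^\ast(\omega)}|>\veps\, a_{k_n^\ast(\omega)}\bigr)=0,
\]
where $P(C_n)\geq 1-\delta$ for all large $n$. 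The localization is to a vanishing fraction of $a_{k_n^\ast}$, not to a single lattice point; this is made possible by the extra depth $c\log\log n$ in the definition of the deepest valley, which traps the walk in a narrower sub-well around the valley bottom.

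To convert to the lattice scale, Assumption~\ref{assume}(4) places $d_0$ in the domain of attraction of a positive $\alpha$-stable law, so there is a slowly varying $\ell$ with $a_N/(N^{1/\alpha}\ell(N))$ weakly convergent to a positive $\alpha$-stable variable $Z_\alpha$. Setting $u(x):=(x/\sigma_P)^{2/\alpha}\ell(x^2/\sigma_P^2)\in\calr_{2/\alpha}$ and $b_n(\omega):=a_{k_n^\ast(\omega)}/u(\log n)$, the previous steps give the required quenched statement $X_n/u(\log n)-b_n\to 0$ in $P_\omega$-probability, uniformly in $\omega\in C_n$. For the annealed weak convergence $b_n\Rightarrow b_\infty$ to a non-degenerate limit under $P$, I would combine the joint asymptotic behavior of $k_n^\ast$ (of Kesten--Golosov type) and of $a_{k_n^\ast}$ (of $\alpha$-stable type): under the additional independence of $(\lambda_j)$ and $(d_j)$ the limit takes the explicit independent-product form $M_\infty^{1/\alpha}Z_\alpha$, and in the general stationary ergodic setting a tightness argument combined with extraction of a convergent subsequence still produces a non-degenerate $b_\infty$.

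The main obstacle is the lattice-level localization in the second stage. Because $\alpha<1$, the maximum of the $d_j$'s up to index $N$ is of the same order as the full sum $a_N$, so a typical valley of $\hat R$ has spatial width comparable in order to $a_{k_n^\ast}$ itself. One therefore cannot hope for localization at a single lattice site, and the best available statement is localization to a sub-interval of vanishing relative size; this forces the additional $c\log\log n$ depth in the valley construction and a delicate hitting-time analysis that balances the narrower sub-well against the heavy-tailed $O(d_k^2)$ traversal cost between consecutive marked sites.
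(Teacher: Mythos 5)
Your proposal is correct in outline but organizes the argument differently from the paper. The paper's proof rests on a single functional limit theorem (Lemma~\ref{potential4}): the lattice-indexed potential $\widehat R_n(t)=\frac{1}{\log n}\sum_{k\le u(\log n)t}\log\rho_k$ converges weakly in $D(\rr_+,\rr)$ to $V_\alpha=\sigma_P W(G_\alpha^{-1})$, a Brownian motion time-changed by the inverse of an $\alpha$-stable subordinator; this is obtained from the joint convergence of $\bigl(R_n(t),\frac1n\eta(\lfloor tr_n\rfloor)\bigr)$ together with Billingsley's random change-of-time lemma, followed by passage to the subsequence $n_k=\log^2k$ to produce the spatial scale $u(\log n)=r_{\log^2 n}\in\calr_{2/\alpha}$. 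The valley analysis is then performed once, for the limit process $V_\alpha$, by invoking Sineva's adaptation of Sinai's argument to potentials admitting a general non-degenerate weak limit, and $b_\infty$ is the bottom of the deepest valley of $V_\alpha$. You instead run classical Sinai/Kesten--Golosov on the marked-site-indexed potential $\hat R_k=\sum_{j\le k}\log\xi_j$ and push the valley-bottom index $k_n^*$ forward through the partial sums $a$. The two routes are two sides of the same joint FCLT (yours uses $a_{\lfloor n\cdot\rfloor}/r_n\Rightarrow G_\alpha$, the paper uses the inverse process $\eta$), and in the independent case your product form is exactly the self-similarity identity $G_\alpha(s^*)=_D \mathrm{sign}(s^*)|s^*|^{1/\alpha}G_\alpha(1)$ applied to the paper's $b_\infty$. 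The paper's organization buys the quenched localization statement wholesale from Sineva's general theorem; yours buys a more explicit description of the limit law.

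Two points should be tightened. First, your ``main obstacle'' is resolved by a different mechanism than the extra $c\log\log n$ of depth: that device ensures the walk reaches and remains near the valley bottom for time $n$, but does nothing about heavy-tailed spacings. What makes the lattice window $[a_{k_n^*-\delta(\log n)^2},a_{k_n^*+\delta(\log n)^2}]$ of width $o_P(u(\log n))$ as $\delta\to0$ is that $k_n^*$ is a functional of $(\lambda_j)$ alone, hence independent of $(d_j)$; in the limit one evaluates the increment of the subordinator $G_\alpha$ over a shrinking interval around an independent random time, where $G_\alpha$ is a.s.\ continuous because it has countably many jumps and the valley-bottom time has a continuous law. (The same independence gives $d_{k_n^*+1}=O_P(1)$ for the flat stretch at the bottom.) Second, this independence of $(\lambda_j)$ and $(d_j)$ is used by the paper as well (it is needed in the proof of Lemma~\ref{potential4}) but is not available in a ``general stationary ergodic setting''; there, your fallback of ``tightness plus extraction of a convergent subsequence'' does not yield a well-defined $b_\infty$, since subsequential limits need not agree. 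That sentence should be replaced by an explicit joint-FCLT hypothesis for the pair of partial-sum processes.
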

We remark that Sineva \cite{alpha,preds} obtained similar limit laws for different variations of Sinai's model. In all these results, the limiting distribution of a properly scaled random walk $X_n$ admits a representation as the deepest valley of an auxiliary process, which in turn is obtained as the weak limit of a suitably defined random potential. For a definition of the notion of a valley in this context, we refer the reader to \cite{sinair} or  \cite{notes}.

\subsection{Environment viewed from the position of the particle}
\label{view}

In this section we study the ``environment viewed from the
particle'' process $(\theta^{X_n} \omega)_{n \geq 0}$ for a transient RWSRE. It is not hard to see
that the pair $(\theta^{X_n} \omega,X_n)$ forms a Markov chain, which allows
to consider $X_n$ as a functional (projection into the second coordinate) of a Markov process.
Even though the state space of this Markov chain is considerably large, the representation is
useful due to the fact that the underlying Markov chain turns out to be stationary and ergodic in the transient regime.
The concept of the environment viewed from the particle was introduced by S.~Kozlov in a broader context in \cite{kozlov} (see also
\cite{zsurvey} and \cite{bremont,bremont-l,szlln}). In Section~\ref{llns}, we proved the existence of the asymptotic speed $\mbox{v}_P:=\lim_{n\to\infty} X_n$ for RWSREs associated with a stationary and ergodic environment $(d_n,\lambda_n)_{n\in\zz}$ by using a direct approach. In fact, using the techniques described in \cite[Section~2.2]{notes} and the existence of the dual environment, one can prove the following result. Similarly to \eqref{rhoxi}, let
\beqn
\label{xin}
\xi_n=\frac{1-\lambda_n}{\lambda_n},\qquad n\in\zz.
\feqn
We have:
\begin{theorem}
\label{character}
Consider a random walk $X_n$ in a stationary and ergodic sparse environment $(\lambda_n,d_n)_{n\in\zz}.$
Assume that $E_P(\log \xi_0)$ is well defined (possibly infinite) and $E_P(d_0)<\infty.$
Then
\begin{itemize}
\item [(a)]
$\mbox{\em v}_{_P}>0$ if and only if there exists a stationary distribution $P^\circ$ equivalent
to $P$ for the Markov chain $\ol \omega_n=\theta^{X_n} \omega,$ $n \geq 0.$ If such a distribution $P^\circ$ exists it is unique and is given by the following formula:
\beqn
\label{qm}
P^\circ(B)=\mbox{\rm v}_{_P} E_Q\Bigl[E_\omega^0\Bigl(\sum_{n=0}^{T_1-1}\ind{\ol \omega_n \in B}\Bigr)\Bigr],~~~~~B \in \calf,
\feqn
where $\ol \omega_n:=\theta^{\xi_n}\omega$ and $Q$ is the distribution of the dual environment.
\item [(b)] $(\ol \omega_n)_{n\geq 0}$ is an ergodic process under $\pp^\circ:=P^\circ\otimes P_\omega.$
\item [(c)] $\frac{dP^\circ}{dP}=\frac{dQ}{dP}\times \Lambda(\omega)=\frac{d_0\cdot \Lambda(\omega)}{E_P(d_0)},$ where
\beqn
\label{lambda}
\Lambda(\omega):=\frac{1}{\omega_0}\Bigl[1+\sum_{i=1}^\infty \prod_{j=1}^i \rho_j\Bigr]
=\frac{1}{\omega_0}\Bigl[d_1+\sum_{i=1}^\infty d_{i+1}\times \prod_{j=1}^i \xi_j\Bigr].
\feqn
\item [(d)] ${\mbox{\rm v}}_P=1\slash E_Q(\Lambda)=\frac{E_P(d_0)}{E_P(d_0\Lambda)}.$
\end{itemize}
\end{theorem}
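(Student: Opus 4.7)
The plan is to establish Theorem~\ref{character} by adapting the classical Kozlov--Molchanov construction of an invariant measure for the environment-viewed-from-the-particle chain $\ol\omega_n=\theta^{X_n}\omega$ (cf.~\cite[Section~2.2]{notes}) to the sparse setting, bridging to the stationary framework via the dual environment $\witi\omega$ of Theorem~\ref{theorem-a}. Although $\omega$ is only cycle-stationary under $P$, the equivalent measure $Q$ furnished by the Palm duality renders $\witi\omega$ genuinely stationary and ergodic, so that Kozlov's occupation-time construction applies verbatim in the $Q$-world. The asymptotic speed of the RWSRE coincides with that of the walk in the dual environment under $Q$, so $\mbox{\rm v}_P>0$ holds in one framework precisely when it holds in the other; consequently the results are first obtained under $Q$ and then transported back to $P$ via the factor $dQ/dP=d_0/E_P(d_0)$.

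To prove (a), I would first check that the right-hand side of \eqref{qm} defines a probability measure exactly when $\mbox{\rm v}_P>0$. Taking $B=\Omega$ reduces the total mass to $\mbox{\rm v}_P\cdot E_Q(E_\omega^0(T_1))$, which equals $1$ via the law of large numbers for $T_n/n$ under $\pp$ from Theorem~\ref{lln-thm}. Invariance of $P^\circ$ under the transition kernel of $\ol\omega_n$ is a standard application of the strong Markov property at the successive hitting times $T_k$: stationarity of $Q$ makes the $Q$-expected contribution of each excursion $[T_k,T_{k+1})$ identical, so the sum in \eqref{qm} is unchanged under a one-step shift of the chain. Equivalence of $P^\circ$ and $P$ follows from strict positivity and $P$-a.s.\ finiteness of $\Lambda$, controlled by the integrability of $\sum_{i\geq 1}\prod_{j=1}^i\rho_j$ in the right-transient regime. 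For the converse, any invariant $P^\circ$ equivalent to $P$ forces $\mbox{\rm v}_P=E_{\pp^\circ}(2\omega_0-1)$ via Birkhoff's theorem applied to the increments of $X_n$ under $\pp^\circ$, and equivalence then excludes $\mbox{\rm v}_P=0$. Uniqueness and the ergodicity statement (b) follow from the standard $0$--$1$ argument for the tail $\sigma$-algebra of $\ol\omega_n$ under an equivalent invariant measure (see \cite[Section~2.1.2]{notes} and \cite{zsurvey}).

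For (c), combining \eqref{qm} with the Palm identity $dQ/dP=d_0/E_P(d_0)$ from Theorem~\ref{theorem-a}(b) yields the factorization $dP^\circ/dP=(dQ/dP)\cdot \Lambda(\omega)$, where $\Lambda$ is, up to $Q$-a.s.\ equality, the Radon--Nikodym density of $P^\circ$ with respect to $Q$. Expanding the inner expectation of \eqref{qm} as an occupation-time sum $\sum_k N_\omega^k\,\ind{\theta^k\omega\in B}$ and shifting each summand by $-k$ using the $Q$-stationarity produces the series $\omega_0^{-1}\bigl(1+\sum_{i\geq 1}\prod_{j=1}^i\rho_j\bigr)$ displayed in \eqref{lambda}. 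The second equality in \eqref{lambda} is then a deterministic regrouping: in the sparse setting $\rho_j=1$ off $\cala$ and $\rho_{a_k}=\xi_k$, so partitioning $\{1,\ldots,i\}$ according to the positions of marked sites and collecting the $(d_k-1)$ consecutive ``flat'' indices collapses the inner sums into $d_1+\sum_{i\geq 1}d_{i+1}\prod_{j=1}^i\xi_j$. Claim (d) is then direct: the total-mass identity $\int dP^\circ=1$ gives $\mbox{\rm v}_P=1/E_Q(\Lambda)$, and transferring the expectation back to $P$ via $dQ/dP=d_0/E_P(d_0)$ yields $\mbox{\rm v}_P=E_P(d_0)/E_P(d_0\Lambda)$.

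The main difficulty will be the rigorous justification of the termwise rearrangement of the series defining $\Lambda$ under $Q$, and the subsequent Fubini-type interchanges needed to transfer the identity back to $P$ against the unbounded density $d_0/E_P(d_0)$. Finiteness of $\Lambda(\omega)$ $P$-a.s.\ follows from the ergodic theorem applied to the random potential $R_n$ in \eqref{potential}: right-transience $E_P(\log\xi_0)<0$ forces $R_n\to-\infty$ on the positive axis and hence $\sum_{i\geq 1}\prod_{j=1}^i\rho_j<\infty$ $P$-a.s.; the $L^1(Q)$ control required for the Fubini step combines this bound with Corollary~\ref{cor-1}(a) and the assumption $E_P(d_0)<\infty$.
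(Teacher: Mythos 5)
Your overall strategy is the one the paper itself points to: the authors omit the proof, stating that it ``follows along the lines of'' Lemmas~2.1.18, 2.1.20, 2.1.25 and Corollary~2.1.25 of \cite{notes}, carried out in the stationary $Q$-world supplied by the dual environment of Theorem~\ref{theorem-a} and transported back to $P$ through the density $d_0/E_P(d_0)$. Your treatment of the forward direction of (a), of (b), and of (c)--(d) — the occupation-time expansion of \eqref{qm}, the shift of each summand by $Q$-stationarity, the regrouping of the flat stretches where $\rho_j=1$ into the $d_{i+1}\prod_{j}\xi_j$ form (which is exactly the identity \eqref{curious} used elsewhere in the paper), and the total-mass normalization giving (d) — is consistent with that program. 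Note only that the density of $P^\circ$ with respect to $Q$ produced by \eqref{qm} is $v_P\,\Lambda$ rather than $\Lambda$, so the identities in (c) should be read up to this normalizing constant; this is a bookkeeping point shared with the paper's statement.

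The genuine gap is in the converse half of (a). You write that Birkhoff's theorem under $\pp^\circ$ gives $v_P=E_{P^\circ}(2\omega_0-1)$ and that ``equivalence then excludes $v_P=0$.'' Nothing in the equivalence of $P^\circ$ and $P$ by itself excludes that this expectation vanishes: an equivalent stationary law for the environment chain with zero mean local drift is not ruled out by any soft argument, and this is precisely the single step the paper singles out as \emph{not} following from the standard machinery. The authors state explicitly that ``the only exception is the proof that the existence of the environment viewed from the position of the particle actually implies $\mathrm{v}_P>0$,'' and that it requires a modification of \cite[Theorem~3.5(ii)]{bremont} or \cite[Theorem~2.3]{strip}. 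Those arguments do real work: assuming the drift under the invariant law is zero, one derives from the explicit structure of the invariant density (or from a harmonic-function/martingale construction) a contradiction with recurrence or with the integrability of that density. As written, your implication ``an equivalent stationary $P^\circ$ exists $\Rightarrow v_P>0$'' is asserted rather than proved; you need either to reproduce such an argument adapted to the sparse setting or to invoke it explicitly.
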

With one exception, the proof of Theorem \ref{character} follows along the lines of the corresponding results in
\cite[Section~2.1]{notes} (namely, Lemmas~2.1.18, 2.1.20, 2.1.25, and Corollary~2.1.25 therein). The only exception
is the proof that the existence of the environment viewed from the position of the particle actually implies
$\mbox{v}_P>0.$ The latter can be obtained by a straightforward modification of the proofs of
\cite[Theorem 3.5 (ii)]{bremont} or \cite[Theorem~2.3]{strip} for instance. The proof of Theorem~\ref{character} is therefore omitted.
\begin{remark}
The asymptotic speed for the simple nearest-neighborhood random walk on $\zz$ with probability of jumps forward $p$
and jumps backward $q=1-p$ is $(p-q)=2p-1.$ Although $\mbox{\rm v}_P$ is not equal to $E_Q(2\omega_0-1),$ quite remarkably it turns out to be equivalent to $E_{P^\circ}(2\omega_0-1)$ (compare, for instance, with formula (2.1.29) in \cite{notes}).
\end{remark}

\section{Proofs}
\label{proofs}
\subsection{Proof of Theorem~\ref{rec1-thm}}
\label{rec1-proofs}
Recall the definitions of $\rho_n$ and $\xi_n$ in \eqref{rhoxi}. Furthermore, for a non-zero integer $n$ let $\eta_n$ be the number of marked sites within the closed interval $I_n=\mbox{sign}(n)\cdot [1,n].$ More precisely, let
$\eta_0=0$ and
\beqn
\label{etan}
\eta_n=\chi(I_n \cap \cala)=
\left\{
\begin{array}{lc}
\sum_{k=1}^n \ind{k\in \cala}&\mbox{if}~n>0\\
$\mbox{}$&\\
\sum_{k=1}^n \ind{-k\in \cala}&\mbox{if}~n<0.
\end{array}
\right.
\feqn
Notice that by the ergodic theorem,
\beqn
\label{renewal}
\lim_{|n|\to\infty} \frac{\eta_n}{|n|}=Q(0\in\cala)=\frac{1}{E_P(d_1)},\qquad P-\as~\mbox{and}~Q-\as
\feqn
Denote
\beq
S(\omega)=\sum_{k=1}^\infty\rho_1\rho_2\cdots \rho_k\qquad\mbox{and}\qquad F(\omega)=\sum_{k=0}^\infty\rho_0^{-1}\rho_{-1}^{-1}\cdots \rho_{-k}^{-1}.
\feq
To prove Theorem~\ref{rec1-thm} it suffices (see, for instance, the proof of \cite[Theorem~2.1.2]{notes}) to show that the conditions of the theorem imply
\beqn
\label{exitp}
P\bigl(S(\omega)=F(\omega)=+\infty\bigr)=1
\feqn
\begin{remark}
The functions $S(\omega)$ and $F(\omega)$ appear in the solution of the gambler's ruin problem for an infinite box. Therefore, they
are related to the basic recurrence-transience properties of the random walk (see, e.g.,  \cite[Theorem~2.1.2]{notes}). In particular, \eqref{exitp} implies recurrence.
\end{remark}
Toward this end, note that $\eta_{a_n}=\eta_{a_n+1}=\ldots=\eta_{a_{n+1}-1}=n$ for $n\geq 0,$ and hence
\beqn
\label{curious}
S(\omega)=\sum_{k=1}^{\infty}\xi_1\xi_2\cdots \xi_{\eta_k}=(a_1-1)+\sum_{n=1}^{\infty}\xi_1\xi_2\cdots \xi_n \cdot d_{n+1},
\feqn
where, to claim the first identity, we used the standard convention that $\xi_1\xi_2\cdots \xi_{\eta_k}=1$ if $\eta_k=0.$ Similarly,
$\eta_{a_n}=\eta_{a_n+1}=\ldots=\eta_{a_{n+1}-1}=n+1$ for $n<0,$ and hence
\beqn
\label{curious1}
F(\omega)=\sum_{k=1}^{\infty}\xi_0^{-1}\xi_{-1}^{-1}\xi_{-2}^{-1}\cdots \xi_{-\eta_k}^{-1}
=\sum_{n=0}^{\infty}\xi_0^{-1}\xi_{-1}^{-1}\cdots \xi_{-n}^{-1} \cdot d_{-n}.
\feqn
Furthermore, the condition $E_P(\log d_0)=+\infty$ implies that $\sum_{n=1}^\infty P(\log d_0>M\cdot n)=\infty$ for any $M>0.$
Thus, since $d_n$ are i.i.d., it follows from the second Borel-Cantelli lemma that $P(\log d_n>M\cdot n~\io)=1$ for any $M>0.$ Hence, the ergodic theorem
along with the condition $E_P\bigl(|\log \xi_0|\bigr)<+\infty$ imply that, $P-\as,$
\beq
\limsup_{n\to \infty}\frac{1}{n}\log \Bigl(d_{n+1}\cdot \prod_{k=1}^n \xi_k\Bigr)=
\limsup_{n\to\infty}\frac{1}{n}\Bigl(\sum_{k=1}^n\log\xi_k+\log d_{n+1}\Bigr)=+\infty,
\feq
which yields $P\bigl(S(\omega)=+\infty\bigr)=1.$ A similar argument shows that, under the conditions of the theorem, $P\bigl(F(\omega)=+\infty\bigr)=1$ and hence \eqref{exitp} holds, as desired.
\qed
\subsection{Proof of Theorem~\ref{lln-thm}}
\label{lln-proofs}
The proof is an adaption of the corresponding arguments for the regular RWRE. See, for instance, \cite[Section 2.2]{notes}.
\par
Recall the definitions of $T_n$ and $\tau_n$ in \eqref{etan}. We have:
\begin{lemma}
\label{ste-lemma}
Assume that the conditions of Theorem~\ref{rec-thm} hold and suppose, in addition, that $\,\pp(\limsup_{n\to\infty} X_n=1).$
Then $(\tau_n)_{n\in\nn}$ is a stationary and ergodic sequence under the law $\pp.$
\end{lemma}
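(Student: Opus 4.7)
The plan is to realize each excursion time $\tau_n$ as a measurable function of the environment seen from the $(n-1)$-st marked site together with an independent uniform randomizer, and then to deduce stationarity and ergodicity of $(\tau_n)$ from the corresponding properties of this product sequence.

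Set $\omega^{(n)}:=\theta^{a_{n-1}}\omega.$ A direct inspection of \eqref{senv} shows that shifting the underlying pair sequence $(\lambda_k,d_k)_{k\in\zz}$ by $n-1$ positions produces exactly the environment $\theta^{a_{n-1}}\omega$; equivalently, $\omega^{(n)}$ is the sparse environment associated with $(\lambda_{k+n-1},d_{k+n-1})_{k\in\zz}.$ Since the pair sequence is stationary and ergodic under $P$ (by the first hypothesis carried from Theorem~\ref{rec-thm}), so is the $\Omega$-valued sequence $(\omega^{(n)})_{n\geq 1}.$ Next, applying the strong Markov property under $P_\omega$ at each stopping time $T_{a_{n-1}}$ yields that, conditionally on $\omega,$ the variables $(\tau_n)_{n\geq 1}$ are independent, and the conditional law of $\tau_n$ equals that of the hitting time of $d_n=d_1(\omega^{(n)})$ (the first positive marked site of $\omega^{(n)}$) by a walk launched from $0$ in the environment $\omega^{(n)}.$ The hypothesis $\pp(\limsup_{k\to\infty}X_k=+\infty)=1$ guarantees that these hitting times are $\pp$-a.s.\ finite.

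I then enlarge the probability space to support an i.i.d.\ sequence $(U_n)_{n\geq 1}$ of uniform $[0,1]$ random variables, independent of $\omega$, and select (via a standard measurable inverse-CDF construction) a jointly measurable map $g:\Omega\times[0,1]\to\nn$ such that, for every $\omega'\in\Omega,$ the pushforward of Lebesgue measure under $g(\omega',\cdot)$ is the $P_{\omega'}$-law of the hitting time of $d_1(\omega').$ The conditional independence recorded above then implies that $\bigl(g(\omega^{(n)},U_n)\bigr)_{n\geq 1}$ is a version of $(\tau_n)_{n\geq 1}$ with the same joint law under $\pp.$

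To conclude, observe that $(\omega^{(n)},U_n)_{n\geq 1}$ is the independent product of the stationary ergodic sequence $(\omega^{(n)})$ with the i.i.d.\ sequence $(U_n).$ Since i.i.d.\ sequences are strongly mixing (hence weakly mixing), the product of a stationary ergodic process with an independent weakly mixing process is itself stationary and ergodic, a classical fact which can be verified on the generating algebra of product cylinder events. Applying the measurable map $g$ coordinatewise preserves stationarity and ergodicity, so $(\tau_n)_{n\geq 1}$ is stationary and ergodic under $\pp,$ as desired. I expect the product-ergodicity step and the measurable selection of $g$ to be the main technical points, although both are routine.
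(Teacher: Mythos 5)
Your argument is correct, and it implements the same underlying idea as the paper --- adjoin independent uniform randomness to the environment so that $(\tau_n)$ becomes a shift-equivariant measurable factor of a stationary ergodic sequence --- but the coupling is organized differently. The paper attaches an i.i.d.\ uniform $\gamma_{x,i}$ to each (site, visit-number) pair and realizes the \emph{entire path} of the walk as a deterministic function of the environment and this site-indexed field; $\tau_n$ is then a function of the data attached to indices $\leq n$, and ergodicity is read off from the stationarity and ergodicity of the augmented sequence $(C_n)$. You instead attach a single uniform $U_n$ to each excursion, invoke the strong Markov property at $T_{a_{n-1}}$ to get conditional independence of the $\tau_n$ given $\omega$, and use an inverse-CDF selection $g$ to produce a version of $(\tau_n)$ of the form $g(\theta^{a_{n-1}}\omega,U_n)$. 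Your version avoids the indexing mismatch inherent in the paper's construction (uniforms indexed by lattice sites versus excursions indexed by marks, so that consecutive excursions consume blocks of uniforms of random length $d_n$), at the cost of making explicit the strong Markov step and the measurable-selection step; both proofs ultimately rest on the same product-ergodicity fact (ergodic $\times$ weakly mixing $=$ ergodic, the i.i.d.\ factor being mixing), which the paper leaves implicit in the assertion that $(C_n)$ is ergodic. One cosmetic point: since $\lambda_k$ may equal $1/2$, the level $d_1(\omega')$ is not in general measurable with respect to the environment $\omega'$ alone, so $g$ should be defined on the space of shifted pair sequences $(\lambda_{k+n-1},d_{k+n-1})_{k\in\zz}$ (equivalently on the marked environment $\Upsilon$) rather than on $\Omega$; this changes nothing in the argument, as the stationarity and ergodicity you use are exactly those of the pair sequence.
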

\begin{proof}[Proof of Lemma~\ref{ste-lemma}]
Enlarge, if needed, the underlying probability space to include a sequence of i.i.d. random variables $\gamma=(\gamma_{x,n})_{x\in\zz,n\in\nn}$ such that
\begin{enumerate}
\item $\gamma$ is independent of $\omega$ under the law $P,$ \quad and
\item each random variable $\gamma_{x,n}$ is distributed uniformly on the interval $[0,1].$
\end{enumerate}
Let $\nn_0$ denote the set of non-negative integers, that is $\nn_0=\nn\cup \{0\}.$
For $x\in\zz$ and $n\in\nn_0,$ let $l_x(n)=\sum_{t=0}^n \ind{X_t=x}$ be the number of visits of the random walk to the site $x$ by the time $n.$
For $n\in\nn_0,$ denote $l_n=l_{X_n}(n),$ $\gamma_n=\gamma_{X_n,l_n},$ and $\ol \omega_n=\omega_{X_n}.$
Without loss of generality, we can assume that $X_n$ is defined recursively as follows:
\beq
X_{n+1}=X_n+\ind{\gamma_n<\ol \omega_n}-\ind{\gamma_n \geq \ol \omega_n}.
\feq
For $n\in\nn,$ let $C_n=(\xi_x,d_j,\Gamma_x)_{j,x\leq n},$ where $\Gamma_x=(\gamma_{k,i})_{k\leq x,i\in\nn_0}.$
The sequence $(\tau_n)_{n\in\nn}$ defined by \eqref{tean} is stationary under $\pp$ because $(\lambda_n,d_n)_{n\in\zz}$ is stationary and $\pp(T_n<\infty)=1$
for all $n>0$ under the conditions of the lemma. Furthermore, in the enlarged probability space $(\tau_n)_{n\in\nn}$ becomes a deterministic function of $(C_k)_{k\leq n}.$ This completes the proof of the lemma since the sequence $(C_n)_{n\in\nn}$ is stationary and ergodic under $P$ in the enlarged probability space.
\end{proof}
Under the conditions of Lemma~\ref{ste-lemma}, the ergodic theorem yields
\beq
\frac{T_{a_{n}}}{n}=\frac{1}{n}\sum_{i=1}^n\tau_{a_{i}} \to \mathbb{E}(\tau_{a_{1}})\quad \mbox{as}~n\to\infty, \qquad \pp-\as
\feq
We have
\begin{lemma}
\label{average-lemma}
Assume that the conditions of Theorem~\ref{rec-thm} hold and suppose, in addition, that $\lambda$ and $\cala$ are independent under $P.$ Then:
\begin{itemize}
\item [(a)] $\ee(T_{a_1})=\mbox{\rm VAR}_P(d_1)+E_P(\ol S)\cdot [E_P(d_1)]^2,$
\item [(b)] $\ee(T_{a_{-1}})=\mbox{\rm VAR}_P(d_1)+E_P(\ol F)\cdot [E_P(d_1)]^2.$
\end{itemize}
\end{lemma}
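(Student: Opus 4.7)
The plan is to compute the quenched hitting-time expectation $E_\omega(T_{a_1})$ in closed form using the sparse structure of $\omega$, and then integrate against $P$ using the independence of $\lambda$ and $\cala$. First, decompose $T_{a_1} = \sum_{k=1}^{d_1}(T_k - T_{k-1})$; by the strong Markov property and shift-invariance, $E_\omega(T_k - T_{k-1}) = E_{\theta^{k-1}\omega}(T_1)$. A standard first-step analysis (setting $g_i := E_\omega(T_{i+1}-T_i)$ gives the recursion $g_i = 1/\omega_i + \rho_i g_{i-1}$, iterated) produces the classical quenched formula
\[
E_\omega(T_1 \mid X_0=0) \;=\; 1 + 2\sum_{i=0}^\infty \prod_{j=0}^i \rho_{-j}.
\]

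Second, I exploit the sparse structure. Since $\omega_j = 1/2$ (hence $\rho_j=1$) on $\{1,\ldots,d_1-1\}$ and $\rho_0=\xi_0$, applying the quenched formula to $\theta^{k-1}\omega$ for $1\le k\le d_1$ produces the uniform expression $E_{\theta^{k-1}\omega}(T_1) = (2k-1) + 2\,\xi_0 H$, where
\[
H \;:=\; 1 + \sum_{m=1}^\infty \prod_{n=1}^m \rho_{-n}.
\]
Since $\sum_{k=1}^{d_1}(2k-1) = d_1^2$, this telescopes to the clean quenched identity $E_\omega(T_{a_1}) = d_1^2 + 2 d_1\,\xi_0 H$. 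Next I unfold $H$: the marked sites on the negative half-line are at $a_{-k}=-(d_0+d_{-1}+\cdots+d_{-k+1})$, so grouping the indices $m$ into blocks on which the number of marked sites in $[-m,-1]$ is constant gives
\[
H \;=\; d_0 \,+\, \sum_{k=1}^\infty d_{-k}\prod_{l=1}^k \xi_{-l}.
\]

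Third, take the $P$-expectation. Using that $d_1$ is independent of $(d_0, d_{-1}, \ldots)$ (the i.i.d./renewal structure implicit in $\cala$) and of all the $\xi_j$'s (via the independence of $\lambda$ and $\cala$), one factors $E_P(d_1\,\xi_0 H) = E_P(d_1)\cdot E_P(\xi_0 H)$. Separating $\xi$-factors from $d$-factors by independence and using $E_P(d_{-k})=E_P(d_0)$ by stationarity,
\[
E_P(\xi_0 H) \;=\; E_P(d_0)\Bigl[E_P(\xi_0) + \sum_{k=1}^\infty E_P\bigl(\xi_0 \textstyle\prod_{l=1}^k \xi_{-l}\bigr)\Bigr].
\]
Stationarity of $(\xi_n)_{n\in\zz}$ gives $E_P(\xi_0\prod_{l=1}^k\xi_{-l}) = E_P(\prod_{j=0}^k\xi_j)$ (shift indices by $k$), so the bracket collapses to $\tfrac12(E_P(\ol S)-1)$. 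Assembling everything and using $E_P(d_1^2)-[E_P(d_1)]^2 = \mbox{VAR}_P(d_1)$ delivers part (a). Part (b) follows by the entirely symmetric argument applied to the leftward walk: decompose $T_{a_{-1}}$ as a sum of one-step hitting times $T_{-j-1}-T_{-j}$, which swaps the roles of the two half-lines and replaces $\ol S$ by $\ol F$.

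The main technical obstacle is the combinatorial bookkeeping that converts $\prod_{n=1}^m\rho_{-n}$ into the $d_{-k}$-weighted $\xi$-product form for $H$; once this identity is in hand, the expectation step is a routine application of independence and stationarity.
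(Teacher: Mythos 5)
Your proof is correct and reaches the same quenched identity as the paper, $E_\omega(T_{a_1})=d_1^2+2d_1\sum_{k\geq 0}\xi_0\xi_{-1}\cdots\xi_{-k}\,d_{-k}$, but by a genuinely different decomposition. The paper works at the level of whole blocks between marked sites: it conditions on the first step of the walk, uses the gambler's-ruin probabilities and expected exit times for the symmetric walk on $[a_{-1},a_1]$, and obtains a recursion for $\frac{1}{a_1}E_\omega(T_{a_1})$ in terms of $E_{\omega,a_{-1}}(T_0)$, which it then iterates block by block. You instead work at the level of individual lattice sites: you decompose $T_{a_1}=\sum_{k=1}^{d_1}(T_k-T_{k-1})$, invoke the classical one-step recursion $g_i=1/\omega_i+\rho_i g_{i-1}$ and the resulting series $E_\omega(T_1)=1+2\sum_{i\geq 0}\prod_{j=0}^i\rho_{-j}$, and then do the block bookkeeping inside that series (your computation of $H=d_0+\sum_{k\geq1}d_{-k}\prod_{l=1}^k\xi_{-l}$ and of $\sum_{k=1}^{d_1}(2k-1)=d_1^2$ checks out, as does the collapse of the bracket to $\tfrac12(E_P(\ol S)-1)$). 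Your route is more mechanical and reuses a standard RWRE formula off the shelf; the paper's exploits the sparse structure earlier and is shorter once the ruin probabilities are quoted. One point deserves flagging, though it afflicts the paper's proof equally: passing from $E_P(d_1 d_{-k})$ to $[E_P(d_1)]^2$ requires $d_1$ to be uncorrelated with each $d_{-k}$, $k\geq 0$, which is not implied by the stated hypotheses (stationarity and ergodicity of $(d_n)$ plus independence of $\lambda$ and $\cala$). You at least make this step explicit by invoking an ``i.i.d./renewal structure,'' but you should not describe it as implicit in $\cala$; it is an additional assumption. Since all terms are nonnegative, the interchange of $E_P$ with the infinite sum is justified by Tonelli, so the truncation discussion in the paper is only needed to decide finiteness, not validity, of the identity.
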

\begin{proof}[Proof of Lemma~\ref{average-lemma}]
We will only prove the result in (a), the proof of (b) being similar.
To evaluate $T_{a_1},$ we will use a decomposition of the paths of the random walk according to its first step:
\beqn
\label{tean-decomp}
&&
T_{a_1}=1+\ind{X_1=1}[\ind{\witi T_0<\witi T_{a_1}}(\witi T_0+T_{a_1}')
+\ind{\witi T_0>\witi T_{a_1}}\witi T_{a_1}]
\\
\nonumber
&&
\quad
+
\ind{X_1=-1}[\ind{\widehat T_0<\widehat T_{a_{-1}}}(\widehat T_0+T_{a_1}'')
+\ind{\widehat T_0>\widehat T_{a_{-1}}}(\widehat T_{a_{-1}}+T_0'+T_{a_1}''')],
\feqn
where
\beq
\begin{array}{ll}
\witi T_0=\inf\{n>T_1:X_n=0\},
&
\witi T_0+T_{a_1}'=\inf\{n>\witi T_0: X_n=a_1\},
\\
\witi T_{a_1}=\inf\{n>T_1: X_n=a_1\},
&
\widehat T_0=\inf\{n>T_{-1}:X_n=0\},
\\
\widehat T_0+T_{a_1}''=\inf\{n>\widehat T_0: X_n=a_1\},
&
\widehat T_{a_{-1}}=\inf\{n>T_{-1}: X_n=a_{-1}\},
\\
\widehat T_{a_{-1}}+T_0'=\inf\{n>\widehat T_{a_{-1}}: X_n=0\},
&
\widehat T_{a_{-1}}+T_0'+T_{a_1}'''=\inf\{n>T_{a_{-1}}+T_0':X_n=a_1\}.
\end{array}
\feq
Taking quenched expectations $E_\omega(\cdot)$ in both sides of \eqref{tean} yields
\beq
E_\omega(T_{a_1})=1&+&\lambda_0[\cale_1(T_0 \wedge T_{a_1})+\calp_1(T_0<T_{a_1})E_\omega(T_{a_1})]
\\
&+&(1-\lambda_0)[\cale_{-1}(T_0 \wedge T_{a_{-1}})+
E_\omega(T_{a_1})+\calp_{-1}(T_{a_{-1}}<T_0) E_{\omega,a_{-1}}(T_0)].
\feq
Using the solution of the gambler's ruin problem under $\calp,$ we obtain
\beq
\frac{\lambda_0}{a_1}E_\omega(T_{a_1})&=&1+\lambda_0(a_1-1)+(1-\lambda_0)(|a_{-1}|-1)+\frac{1-\lambda_0}{|a_{-1}|}E_{\omega,a_{-1}}(T_0)
\\
&=&
\lambda_0 a_1 +(1-\lambda_0)|a_{-1}| +\frac{1-\lambda_0}{|a_{-1}|} E_{\omega,a_{-1}}(T_0).
\feq
Thus,
\beq
\frac{1}{a_1}E_\omega(T_{a_1})=a_1 +\xi_0|a_{-1}| +\xi_0\cdot \frac{1}{|a_{-1}|} E_{\omega,a_{-1}}(T_0).
\feq
Iterating yields
\beq
\frac{1}{a_1}E_\omega(T_{a_1})=a_1 +2\sum_{k=0}^\infty \xi_0\xi_{-1}\cdots\xi_{-k}\cdot d_{-k}.
\feq
Taking expectations with respect to $P$, and using a truncation argument similar to that given in the proof of \cite[Lemma~2.1.12]{notes} in order
to verify when $\ee(T_{a_1})<\infty,$ we obtain
\beq
\ee(T_{a_1})=\mbox{VAR}_P(d_1)+\bigl[E_P(d_1)\bigr]^2\cdot E_P(\ol S),
\feq
as desired. This completes the proof of (a) of the lemma. Part (b) can be derived along the same lines, and hence its proof is omitted.
\end{proof}
In view of Lemma~\ref{average-lemma}, we are now in a position to finish the proof of Theorem~\ref{lln-thm}. Variations of Lemma \ref{teax-lemma} below have appeared in a  number of references in the field of random walks in random environments. Nonetheless, we provide a proof for the reader's convenience.
\begin{lemma}
\label{teax-lemma}
Assume that the conditions of Theorem~\ref{rec-thm} hold and suppose, in addition, that $\lim_{n\to\infty}\frac{T_{a_n}}{n}=\alpha,$ $\pp-\as,$
for some constant $\alpha\leq \infty.$ Then,
\beq
\lim_{n\to\infty}\frac{T_n}{n}=\frac{\alpha}{E_P(d_1)}\qquad\mbox{\rm and}\qquad
\lim_{n\to\infty}\frac{X_n}{n}=\frac{E_P(d_1)}{\alpha},  \qquad \pp-\as
\feq
\end{lemma}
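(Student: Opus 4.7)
The plan is to first pass from the hypothesis on $T_{a_n}/n$ to an asymptotic for $T_n/n$ by a ``change of clock'' from the impurity index $n$ to the spatial index $a_n$, using Birkhoff's theorem applied to $(d_n)$ and the monotonicity of hitting times of a nearest-neighbor walk; then I would deduce the $X_n/n$ asymptotic by the standard hitting-time inversion through the running maximum. The elementary fact that underlies both steps is that a nearest-neighbor walk on $\zz$ started at $0$ must traverse every integer strictly between $0$ and a positive target site $k$ before first hitting $k$, so $T_1\leq T_2\leq\cdots$; consequently each $T_n$ can be sandwiched between hitting times of two impurity sites flanking $n$.

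For the first step, since $(d_n)_{n\in\zz}$ is stationary and ergodic with $E_P(d_1)<\infty$ (the finiteness being implicit in the regime in which the lemma is applied, as is clear from the expressions in Lemma~\ref{average-lemma}), Birkhoff's theorem gives $a_n/n\to E_P(d_1)$ $P$-a.s. For each $n\in\nn$ define $k=k(n):=\max\{j\geq 0: a_j\leq n\}$, so that $a_{k}\leq n<a_{k+1}$; applying the same ergodic theorem at indices $k$ and $k+1$ then yields $k(n)/n\to 1/E_P(d_1)$. Monotonicity gives $T_{a_k}\leq T_n\leq T_{a_{k+1}}$, i.e.
\begin{equation*}
\frac{T_{a_{k(n)}}}{k(n)}\cdot\frac{k(n)}{n}\;\leq\;\frac{T_n}{n}\;\leq\;\frac{T_{a_{k(n)+1}}}{k(n)+1}\cdot\frac{k(n)+1}{n},
\end{equation*}
and since $T_{a_m}/m\to\alpha$ by hypothesis, both sides converge to $\alpha/E_P(d_1)$, giving $T_n/n\to\alpha/E_P(d_1)$ by a squeeze.

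For the second step, let $M_n:=\max_{t\leq n}X_t$. Monotonicity again gives $T_{M_n}\leq n<T_{M_n+1}$, so
\begin{equation*}
\frac{M_n}{T_{M_n+1}}\;\leq\;\frac{M_n}{n}\;\leq\;\frac{M_n}{T_{M_n}}.
\end{equation*}
In the relevant transient-to-the-right regime one has $M_n\to\infty$, and the previous step gives $T_m/m\to\alpha/E_P(d_1)$; both bounds thus tend to $E_P(d_1)/\alpha$ (read as $0$ when $\alpha=\infty$), so $M_n/n\to E_P(d_1)/\alpha$. To replace $M_n$ by $X_n$, observe that on the excursion $[T_{M_n},T_{M_n+1})$ containing $n$ the walk cannot have descended below $M_n-(T_{M_n+1}-T_{M_n})$, hence $0\leq M_n-X_n\leq T_{M_n+1}-T_{M_n}$.

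I expect the last replacement $M_n\leadsto X_n$ to be the only slightly delicate point, because it hinges on $(T_{m+1}-T_m)/m\to 0$: when $\alpha<\infty$ this is immediate from $T_m/m$ having a finite limit, yielding $|M_n-X_n|/n\to 0$ and hence $X_n/n\to E_P(d_1)/\alpha$. When $\alpha=\infty$ I would argue differently, noting that $X_n\to+\infty$ forces $0\leq X_n\leq M_n$ eventually, so $M_n/n\to 0$ gives $X_n/n\to 0$ directly. The analogous statement for a walk transient to the left follows by a symmetric argument applied to $-X_n$ and to the left-indexed impurities $(a_{-n})_{n\in\nn}$.
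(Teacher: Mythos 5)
Your proof is correct and follows essentially the same route as the paper: the first step is the identical squeeze $T_{a_{k(n)}}\leq T_n\leq T_{a_{k(n)+1}}$ with $k(n)=\eta_n$ and the ergodic theorem giving $\eta_n/n\to 1/E_P(d_1)$, and the second step is the same hitting-time inversion, the only cosmetic difference being that you sandwich via the running maximum $M_n$ and the excursion length $T_{M_n+1}-T_{M_n}$, whereas the paper sandwiches $X_n$ between $a_{\zeta(n)}-(n-T_{a_{\zeta(n)}})$ and $a_{\zeta(n)+1}$ for the last impurity index $\zeta(n)$ reached by time $n$. Both versions make the same implicit restriction to the transient-to-the-right regime (with the leftward and recurrent cases handled symmetrically), so nothing is missing relative to the paper's own argument.
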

\begin{proof}[Proof of Lemma~\ref{teax-lemma}]
First, observe that \eqref{etan} implies
\beq
a_{\eta_n}\leq n <a_{\eta_n+1},\qquad \pp-\as
\feq
Thus, in view of \eqref{renewal},
\beq
\lim_{n\to\infty}\frac{T_n}{n}=\lim_{n\to\infty}\frac{T_{a_{\eta_n}+1}}{n}=
\lim_{n\to\infty}\frac{T_{a_{\eta_n}}}{\eta_n}\cdot \frac{\eta_n}{n}=\frac{\alpha}{E_P(d_1)},  \qquad \pp-\as
\feq
Let now $\zeta(n) \in\zz$ be the unique nonnegative random number such that
\beqn
\label{zeta}
T_{a_{\zeta(n)}}\leq n<T_{a_{\zeta(n)+1}}.
\feqn
Since $X_n$ is transient to the right, $\pp\bigl(\lim_{n\to\infty}\zeta(n)=\infty\bigr)=1.$
Furthermore, \eqref{zeta} implies that
\beq
X_n<a_{\zeta(n)+1}\qquad \mbox{and}\qquad X_n\geq a_{\zeta(n)} -(n-T_{a_{\zeta(n)}}).
\feq
Thus,
\beq
\frac{a_{\zeta(n)}}{n}-\Bigl(1-\frac{T_{a_{\zeta(n)}}}{n}\Bigr)\leq\frac{X_n}{n}<\frac{a_{\zeta(n)+1}}{n}.
\feq
But \eqref{zeta} along with the existence of $\lim_{n\to\infty}\frac{n}{T_n}$ yield
\beq
\lim_{n\to\infty}\frac{a_{\zeta(n)}}{n}=\lim_{n\to\infty}\frac{a_{\zeta(n)}}{T_{a_{\zeta(n)}}}=\lim_{n\to\infty}\frac{n}{T_n}=\frac{E_P(d_1)}{\alpha}, \qquad \pp-\as
\feq
Hence,
\beq
\frac{E_P(d_1)}{\alpha}\leq\liminf_{n\to\infty}\frac{X_n}{n}\leq\limsup_{n\to\infty}\frac{X_n}{n}\leq\frac{E_P(d_1)}{\alpha},
\feq
which implies the result in Lemma~\ref{teax-lemma}.
\end{proof}
The proof of Theorem~\ref{lln-thm} is complete.
\subsection{Proof of Theorem~\ref{stable-thm}}
\label{stable-proofs}
The proof uses the dual Markovian environment and the reduction to stable limit laws for random walks in a Markovian environment obtained
in \cite{stable}. Recall the definition of $T_n$ in \eqref{tean}, and observe that the distribution of the trajectory $(X_n)_{n\in\nn}$ under the law $P$ coincides with the
distribution of $(X_{T_{a_0}+n}-a_0)_{n\in\nn}$ under $Q.$ Moreover,
\beq
\bigl|X_n-(X_{T_{a_0}+n}-a_0)\bigr|\leq a_0+ \bigl|X_n-X_{n+T_{a_0}}\bigr|\leq a_0+T_{a_0}.
\feq
Since the random walk is transient to the right, $Q(T_{a_0}<\infty)=P(T_{a_0}<\infty)=1.$ Therefore,
$\frac{1}{c_n}\cdot \bigl|X_n-(X_{T_{a_0}+n}-a_0)\bigr|$ converges to zero in distribution (under the law $Q$) for any sequence of scaling factors
$(c_n)_{n\in\nn}$ such that $\lim_{n\to\infty} c_n=\infty.$ Thus it suffices to prove the stable limit laws under $Q.$
\par
Towards this end, let
\beq
Y_n=n-a_{\eta_n}=n-\sup\{k\in\zz: k\leq n~\mbox{and}~k\in\cala\},\qquad n\in\zz.
\feq
Notice that $Y_{a_k}=0,$ $k\in\nn,$ and
\beq
Y_{n+1}-Y_n=1 \quad \mbox{if} \quad a_{\eta_n}\leq n <a_{\eta_n+1}.
\feq
Let $\zz_+$ denote the set of non-negative integers, that is $\zz_+=\nn\cup\{0\}.$ If Assumption~\ref{measure1} holds, then the sequence $Y=(Y_n)_{n\in\zz}$  is a positive-recurrent Markov chain on $\zz_+$ under the law $Q.$ Furthermore, the transition kernel $H(x,y)=Q(Y_{n+1}=y|Y_n=x)$  is given by
\beq
H(x,y)=
\left\{
\begin{array}{cll}
\frac{P(d_0>x+1)}{P(d_0>x)}&\mbox{\rm if}& y=x+1,~x\in\zz_+\\
\\
\frac{P(d_0=x+1)}{P(d_0>x)}&\mbox{\rm if}& y=0,~x\in\zz_+\\
\\
0&&\mbox{\rm otherwise.}
\end{array}
\right.
\feq
We briefly remark here that $Y$ is non-homogeneous under the law $P.$ It follows from Theorem~\ref{theorem-a} that $Y$  is a stationary Markov chain on $\zz_+$ under the law $Q$, and the  initial distribution of $Y$ is the (unique) invariant distribution of
$H.$ That is, using the notation of Theorem~\ref{theorem-a},
\beq
Q(Y_0=x)=P\bigl(\lfloor Ud_0 \rfloor=x \bigr)=\frac{P(d_0>x)}{E_P(d_0)},\qquad x\in\zz_+.
\feq
It then follows that under $Q,$ the sequence $(Y,\omega)$ constitutes a two-component Markov chain with transition kernel
depending only on the current value of the first component (but not of the second). More precisely, with probability one,
\beq
Q(Y_{n+1}=y,\omega_{n+1}\in A|Y_n=x,\omega_n=u)=\hh(x;y,A),
\feq
where the stochastic kernel $\hh$ on $\zz_+\times (\zz_+\times \Omega)$ is given by
\beq
\hh(x;y,A)=H(x,y) \cdot \bigl(\ind{\frac{1}{2}\in A}\cdot \ind{y=0}+P(\lambda_0\in A)\cdot \ind{y>0}\bigr),\qquad A\in \calb\bigl([0,1]\bigr).
\feq
Clearly, the reverse chain $(Y_n)_{n\in\zz}$ is an irreducible Markov chain
in the finite state space $\{0,1,\ldots,M-1\}.$  In view of the main results of \cite{stable}, in order to establish that the claims of Theorem~\ref{stable-thm} and Proposition~\ref{markov-tau} hold for the RWSRE $X_n$ and the hitting times $T_n,$ it suffices to verify the following set of conditions for the stationary Markov chain $Y_n$ and the associated Markov hidden model
$(Y_{-n},\lambda_{-n}, d_{-n})_{n\in\zz}:$
\begin{itemize}
\item [(B1)] $\limsup_{n \to \infty}
\fracd{1}{n} \log E_{Q} \left(\prod_{i=0}^{n-1} \rho_{-i}^{\beta_1}
\right) \geq 0 $ and $ \limsup_{n \to \infty}
\fracd{1}{n} \log E_{Q} \left(\prod_{i=0}^{n-1} \rho_{-i}^{\beta_2}
\right)<0 $ for some constants $\beta_1>0$ and $\beta_2>0.$
\item [(B2)] The process $q_n=\log \rho_{-n}$ is non-arithmetic
relative to $(x_n)$ in the following sense: there do not exist a constant $\alpha >0$
and a measurable function $\gamma :\cals \to [0,\alpha)$ such that
\beq 
Q\bigl(q_0 \in \gamma(x_{-1})-\gamma(x_0)+\alpha \cdot \zz\bigr)=1. 
\feq
\end{itemize}
To this end, observe that condition (B1) holds (compare with \cite{stable}) because the following holds true: By virtue of Theorem~\ref{theorem-a}
\beq
&&
\limsup_{n \to \infty}
\fracd{1}{n} \log E_Q \left(\prod_{i=0}^{n-1} \rho_{-i}^{\beta}\right)=\limsup_{n \to \infty}
\fracd{1}{n} \log E_Q \left(\prod_{i=1}^n \rho_i^{\beta}\right)
\\
&&
\qquad
=
\limsup_{n \to \infty}
\fracd{1}{n} \log E_P \left(\prod_{i=1}^n \rho_i^{\beta}\right),
\feq
the function $\Lambda(\beta):=\limsup_{n \to \infty}
\fracd{1}{n} \log E \left(\prod_{i=0}^{n-1} \rho_{-i}^{\beta}\right)$ is convex, $\Lambda(0)=0,$ $\Lambda'(0)=E_p(\log \rho_0)<0,$ the distribution
of $\rho_0$ is non-degenerate, and
\beq
E_P\Bigl(\prod_{i=1}^n \rho_i^\kappa\Bigr)&=&
E_P\Bigl(\prod_{i=0}^{\eta_n} \xi_i^\kappa\Bigr)=E_P\Bigl(E_P\Bigl(\prod_{i=0}^{\eta_n} \xi_i^\kappa|\eta_n\Bigr)\Bigr)
\\
&=&E_P\Bigl(\bigl(E_P(\xi_0^\kappa)\bigr)^{\eta_n}\Bigr)=E_P(1^{\eta_n})=1.
\feq
Furthermore, (A6) of Assumption~\ref{measure1} along with the fact that $\xi_n$ are i.i.d. trivially implies (B2). Notice that the measure $Q$ is absolutely continuous with respect to $P,$ and therefore $P\bigl(q_0 \in \gamma(x_{-1})-\gamma(x_0)+\alpha \cdot \zz
\bigr)=0$ guarantees $Q\bigl(q_0 \in \gamma(x_{-1})-\gamma(x_0)+\alpha \cdot \zz
\bigr)=0.$
The proof is complete.
\subsection{Proof of Theorem~\ref{sinai-thm}}
\label{sinai-proofs}
Let $D(\rr)$ denote the set of real-valued c\`{a}dl\`{a}g functions
on $[0,1]$ equipped with the Skorokhod $J_1$-topology. We use the notation $\Rightarrow$ to denote the weak convergence in $D(\rr).$
We have:
\beq
U_n(t):=\frac{1}{r_n}\sum_{k=1}^{\lfloor nt \rfloor} d_k ~~\mbox{\rm converges}~~\mbox{\rm weakly}~~\mbox{\rm to}~~G_{\alpha}(t),
\feq
where $G_{\alpha}(t)$ is a totally asymmetric stable process with
\beq
E\bigl(e^{i\theta G_{\alpha}(1)}\bigr)=\exp\Bigl\{-|\theta|^{\alpha}\Bigl(1-i~\mbox{\rm sign}(\theta)\tan\Bigl(\frac{\pi\alpha}{2}\Bigr)\Bigr)\Bigr\}, \qquad \theta\in\rr.
\feq
The key ingredient of the proof is the following (functional) limit theorem for a suitably defined random potential.
\begin{lemma}
\label{potential4}
\item [(a)] Assume that condition \eqref{E:assum} holds with $\alpha\in(0,1).$ Then, as $n\to\infty,$
\beq
\frac{1}{\log n}\sum_{k=1}^{\lfloor u(\log n)t\rfloor}\log \rho_k \Rightarrow V_\alpha,
\feq
for some sequence $u(n)\in\calr_{2\alpha}.$
\item [(b)] If $E(d_1)<\infty,$ then
\beq
\frac{1}{\sqrt{n}}\sum_{k=1}^{\lfloor nt \rfloor}\log \rho_k \Rightarrow \mu\sigma_P W,
\feq
where $W$ is a standard Brownian motion and $V_\alpha=\sigma_P W(G_{\alpha}^{-1}).$
\end{lemma}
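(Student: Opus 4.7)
The starting point is the observation that $\omega_k=1/2$ (hence $\rho_k=1$) for every $k\notin\cala$, so the partial sums of $\log\rho_k$ collapse onto the visits to $\cala$:
\beq
\sum_{k=1}^m\log\rho_k=\sum_{j=1}^{\eta_m}\log\xi_j,\qquad m\in\nn,
\feq
where $\eta_m$ is the counting process defined in \eqref{etan}. Thus the RWSRE potential is a random time change of the classical i.i.d.\ potential $\sum\log\xi_j$, the time change being the renewal counting process determined by the gaps $(d_k)$. The plan is to establish functional limit theorems for each ingredient separately, and then to combine them via a Skorokhod-composition step.

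For the inner walk, conditions 1--2 of Assumption~\ref{assume} place $\log\xi_0$ squarely in the CLT regime, so Donsker's invariance principle yields, for any deterministic $N\to\infty$,
\beq
N^{-1/2}\sum_{j=1}^{\lfloor Ns\rfloor}\log\xi_j\Rightarrow\sigma_P W(s)\qquad\mbox{in }D([0,\infty)),
\feq
with $W$ a standard Brownian motion. For the time change, condition 4 of Assumption~\ref{assume} places $d_1$ in the domain of attraction of the totally asymmetric stable subordinator $G_\alpha$ of index $\alpha\in(0,1)$: there exists $r\in\calr_{1/\alpha}$ such that $r(n)^{-1}\sum_{k\le\lfloor nt\rfloor}d_k\Rightarrow G_\alpha(t)$. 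Since $\alpha\in(0,1)$, $G_\alpha$ is a.s.\ strictly increasing, its right-continuous inverse $G_\alpha^{-1}$ is continuous, and the standard first-passage/inversion theorem (e.g.\ Whitt, \emph{Stochastic-Process Limits}, Ch.~13) converts the above into
\beq
n^{-1}\eta_{\lfloor r(n)t\rfloor}\Rightarrow G_\alpha^{-1}(t).
\feq
To match scales, set $N_n=(\log n)^2$ and $u(m):=r(m^2)$; a direct check gives $u\in\calr_{2/\alpha}$ and the last display rescales to $N_n^{-1}\eta_{\lfloor u(\log n)t\rfloor}\Rightarrow G_\alpha^{-1}(t)$. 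For part (b), the assumption $E_P(d_1)<\infty$ replaces this with the deterministic renewal limit $n^{-1}\eta_{\lfloor nt\rfloor}\to t/E_P(d_0)$, locally uniformly.

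The two functional convergences must finally be used jointly, which follows from the independence between the $(\lambda_j)$ and $(d_k)$ coordinates of the environment. Because the limits $\sigma_P W$ and $G_\alpha^{-1}$ are a.s.\ continuous, the composition map $(f,g)\mapsto f\circ g$ is continuous at the limit pair (Whitt's composition theorem), and the continuous-mapping theorem yields
\beq
\frac{1}{\log n}\sum_{k=1}^{\lfloor u(\log n)t\rfloor}\log\rho_k=\frac{1}{\log n}\sum_{j=1}^{\eta_{\lfloor u(\log n)t\rfloor}}\log\xi_j\Rightarrow\sigma_P W\bigl(G_\alpha^{-1}(t)\bigr)=:V_\alpha(t),
\feq
which proves (a); part (b) follows identically by composing Donsker's limit with the deterministic time change, and a time-rescaling of $W$ then puts the limit in the stated form. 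The principal technical hurdle is this final composition step: the composition map is notoriously discontinuous on $D\times D$ in general, so I have to verify that the limit pair lies in its set of continuity points. This reduces to two standard facts, that a stable subordinator of index $\alpha\in(0,1)$ has strictly increasing paths a.s.\ (so that $G_\alpha^{-1}$ is continuous) and that Brownian paths are continuous (so there are no coincident jumps), and together they legitimise the continuous-mapping argument.
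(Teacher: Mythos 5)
Your proof is correct and follows essentially the same route as the paper: the collapse identity $\sum_{k\le m}\log\rho_k=\sum_{j\le\eta_m}\log\xi_j$, Donsker's theorem for the $\xi$-sums, the stable functional limit theorem for $\sum_k d_k$ together with its inversion to get the limit of $\eta$, joint convergence via the independence of $(\lambda_j)$ and $(d_k)$, a random-time-change/composition argument (the paper cites Billingsley where you cite Whitt), and finally the $(\log n)^2$ rescaling producing $u$. Your indices $r\in\calr_{1/\alpha}$ and $u\in\calr_{2/\alpha}$ are the consistent ones (they match the statement of Theorem~\ref{sinai-thm}); the lemma's $\calr_{2\alpha}$ and the paper's intermediate $\calr_{\alpha+1}$ appear to be typos.
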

\begin{proof}[Proof of Lemma~\ref{potential}]
$\mbox{}$
\\
{\bf (a)}
Let
\beq
U_{n}(t):=\frac{1}{r_n}\sum_{k=1}^{\lfloor nt \rfloor} d_k=\frac{1}{r_n}a_{ \lfloor nt \rfloor}
\qquad \mbox{and} \qquad R_n(t):=\frac{1}{\sqrt{n}}\sum_{k=1}^{\lfloor nt \rfloor}\log\xi_k.
\feq
It follows from the assumptions of the lemma that, as $n\to\infty,$
\beq
U_{n}(t)\Rightarrow G_\alpha \qquad \mbox{and} \qquad R_n(t)\Rightarrow \sigma_P W.
\feq
for some sequence $r_n\in\calr_{\alpha+1}.$
Let $U_n^{-1}=n^{-1}\cdot \eta( \lfloor t r_n \rfloor)$ and $G_{\alpha}^{-1}$ be the inverses in $D(\rr_+,\rr)$ of $U_n$ and $G_\alpha,$ respectively.
Then the convergence of $U_n$ and $R_n,$ along with their independence of each other, imply (see, for instance, the derivation of the formula (2.29) in \cite{directrw}) that in $D(\rr_+,\rr),$
\beq
\Bigl(R_n(t),\frac{1}{n}\eta(\lfloor t r_n \rfloor)\Bigr) \Rightarrow \bigl(\sigma_P W,G_{\alpha}^{-1}\bigr),\qquad \mbox{\rm as}~n\to\infty.
\feq
Since the paths of the Brownian motion are continuous, it follows from a random change lemma in \cite[p.~151]{Billingsley}
that $R_{n}\bigl(\frac{1}{n}\eta(\lfloor t r_n \rfloor)\bigr)\Rightarrow \sigma_P W(G_{\alpha}^{-1}(t))$ in $D(\rr_+,\rr).$ That is,
\beq
\frac{1}{\sqrt{n}}\sum_{k=1}^{\eta([r_nt])}\log\xi_k\Rightarrow \sigma_P W(G_{\alpha}^{-1}),\qquad \mbox{\rm as}~n\to\infty,
\feq
and, passing to the subsequence $n_k=\log^2 k,$ $k\in\nn,$ we obtain
\beq
\frac{1}{\log k}\sum_{i=1}^{\eta([r_{\log^2 k}t])}\log\xi_i=\frac{1}{\log k}\sum_{i=1}^{[r_{\log^2 k}t]}\log\rho_i\Rightarrow \sigma_P W(G_{\alpha}^{-1}),\qquad \mbox{\rm as}~k\to\infty.
\feq
To conclude the proof of part (a), notice that $r_n\in\calr_\alpha$ implies that $r_{\log^2 n}=u(\log n)$ for some sequence $u(n)\in\calr_{2\alpha},$ as desired.
\\
$\mbox{}$
\\
{\bf (b)} We now turn to the proof of part (b) of the theorem. Since $E_{p}(d_{1})<\infty,$ the renewal theorem implies
\beq
\frac{\eta_{n}}{n}\to C=\frac{1}{E_{p}(d_{1})}.
\feq
Therefore (see, for instance, Theorem~14.4 in \cite[p.~152]{Billingsley}),
\beq
\frac{1}{\sqrt{n}}\sum_{k=1}^{[nt]}\log \rho_k=\frac{1}{\sqrt{n}}\sum_{k=1}^{\eta([nt])}\log \xi_k=\frac{\sqrt{\eta([nt])}}{\sqrt{n}}\frac{1}{\sqrt{\eta([nt])}}
\sum_{k=1}^{\eta([nt])}\log \xi_k \Rightarrow \mu\sigma_P B(\,\cdot\,),
\feq
where the convergence is the weak convergence in the Skorohod space $D(\rr_+).$
\end{proof}
Define the normalized random potential associated with the sparse environment as the following:
\beqn
\label{ar-hat}
\widehat R_n(t)=\sign(t)\cdot \frac{1}{\log n}\sum_{k=1}^{[u(\log n)t]}\log \rho_k=\sign(t)\cdot\frac{1}{\log n}\sum_{k=1}^{\eta{[u(\log n)t]}}\log \xi_k.
\feqn
By Lemma~\ref{potential}, $\{\widehat R_n(t):t\geq 0\}$ converges weakly in $D(\rr_+,R)$ to the process $V_\alpha.$
One can now proceed as in \cite{alpha} in order to establish Theorem~\ref{sinai-thm}. In fact, the proof of the main result
in \cite{alpha} is an adaptation of the original argument of Sinai \cite{sinair} to a situation where the random  potential $\widehat R_n$ in the form
given by \eqref{ar-hat} converges weakly to a non-degenerate process in $D(\rr_+,\rr).$ We remark that a somewhat shorter derivation of Theorem~\ref{sinai-thm} can be obtained by an adaptation of a version of Sinai's argument given in \cite[Section~2.5]{notes}. The version of the proof in this paper follows the approach of \cite{localization1} and is due to Dembo, Guionnet, and Zeitouni.
\section{Appendix}
\label{appen}
\subsection{Proof of Corollary~\ref{cor-1}}
\label{ape}
{\bf (a)} By part~(b) of Theorem~\ref{theorem-a},
\beq
E_Q(d_0)=\int d_0 \cdot \frac{d_0}{E_P(d_0)}dP=\frac{E_P(d_0^2)}{E_P(d_0)},
\feq
which is equivalent to the the first claim in the corollary.
$\mbox{}$
\\
{\bf (b)} By definition, $\witi a_0=\lfloor U d_0\rfloor,$ where $U$ is independent of $d_0$ under $Q.$ We therefore have,
\beq
E_Q(\witi a_0)&=&E_Q\bigl(E_Q(\witi a_0|d_0)\bigr)=E_Q\Bigl(\frac{1}{d_0}\sum_{k=0}^{d_0-1} j\Bigr) =\frac{1}{2}\bigl(E_Q(d_0)-1\bigr),
\feq
which verifies the second claim in the corollary. \qed
\subsection{Proof of Proposition~\ref{compare-speed}}
\label{ape1}
Recall $\ol S$ from \eqref{rols} and $\witi S$ from \eqref{witis}. First, notice that
\beq
\witi S=\sum_{i=1}^\infty \prod_{j=0}^{i-1}\witi \rho_{-j}+\sum_{i=1}^\infty \prod_{j=0}^i \witi \rho_{-j}+1+\witi \rho_0
=1+2\sum_{i=0}^\infty \prod_{j=0}^i \witi \rho_{-j}.
\feq
Next, observe that the following version of the identity \eqref{curious} can be stated in terms of the ``tilde environment" $(\witi a_n,\lambda_n)_{n\in\zz}:$
\beqn
\label{acurious}
\sum_{n=0}^{\infty}\witi \rho_0\witi \rho_1\cdots \witi \rho_n=\witi a_0+\sum_{n=0}^{\infty}\xi_0\xi_1\cdots \xi_n \cdot d_{n+1}.
\feqn
Therefore, using the stationarity of the environment under $Q$ along with the identity \eqref{acurious} and part~(b) of Corollary~\ref{cor-1},
\beq
E_Q\bigl(\witi S\bigr)&=&1+2E_Q\Bigl(\sum_{i=0}^\infty \prod_{j=0}^i \witi \rho_i\Bigr)
=E_Q(2\witi a_0+1)+ E_P(d_0)\cdot \bigl[E_Q(\ol S)-1\bigr]
\\
&=&
E_Q(d_0)+E_P(d_0)\cdot \bigl[E_Q(\ol S)-1\bigr]=\frac{E_P(d_0^2)}{E_P(d_0)}+ E_P(d_0)\cdot \bigl[E_Q(\ol S)-1\bigr]
\\
&=&
\frac{\mbox{\rm VAR}_P(d_0)}{E_P(d_0)}+ E_P(d_0)\cdot E_P(\ol S),
\feq
which together with Theorem~\ref{rec-thm} imply that the asymptotic speed of the RWSRE on the event $\{\lim_{n\to\infty} X_n=+\infty\}$ is $1\bigl\slash E_Q\bigl(\witi S\bigr),$ $\pp-\as,$
under the conditions of Theorem~\ref{lln-thm}. \qed

\subsection*{Acknowledgements}
The research of Anastasios Matzavinos is supported in part by the National Science Foundation under Grants NSF CDS\&E-MSS 1521266 and NSF CAREER 1552903. The research of Alexander Roitershtein is supported in part by the Simons Foundation under Collaboration Grant \#359575.  Alexander Roitershtein would also like to thank the Division of Applied Mathematics at Brown University for the warm hospitality during a visit, which was funded by a divisional IBM fund.

\bibliographystyle{amsplain}

\end{document}